\def\color[#1]#2{}
\gdef\th@break{\normalfont\slshape
  \def\@begintheorem##1##2{\item[%
       \rlap{\vbox{\hbox{\hskip \labelsep\theorem@headerfont ##1\ ##2}%
                   \hbox{\strut}}}]}%
\def\@opargbegintheorem##1##2##3{%
  \item[\rlap{\vbox{\hbox{\hskip \labelsep \theorem@headerfont
                     ##1\ ##2\ ##3}%
                    \hbox{\strut}}}]}}
\newtheorem{theorem}{Theorem}[section]
\newtheorem{definition}{Definition}[section]
\newtheorem{proposition}{Proposition}[section]
\newtheorem{corollary}{Corollary}[section]
\newtheorem{remark}{Remark}
\newtheorem{lemma}{Lemma}[section]
\newcommand{\FF}{\mathbb{F}}
\newcommand{\CC}{\mathbb{C}}
\newcommand{\HH}{\mathbb{H}}
\newcommand{\PP}{\mathbb{P}}
\newcommand{\ZZ}{\mathbb{Z}}
\newcommand{\bfA}{\mathbf{A}}
\newcommand{\bfM}{\mathbf{M}}
\newcommand{\calC}{\mathcal{C}}
\newcommand{\calS}{\mathcal{S}}
\def\Z{\mathbb{Z}}
\def\C{\mathbb{C}}
\def\P{\mathbb{P}}
\def\Sp{\textrm{Sp}}
\def\Jac{\textrm{Jac}}
\def\Pic{\textrm{Pic}}
\def\eps{\epsilon}
\renewcommand{\Im}{\mathop{\mathrm{Im}}\nolimits}
\newcommand{\Ch}[2]{\begin{bmatrix} #1 \\ #2 \end{bmatrix}}
\def\ex{\boldsymbol{e}}
\newcommand{\car}[2]{%
\left[%
\begin{smallmatrix}%
\displaystyle#1\\%
\displaystyle#2\end{smallmatrix}\right]}
\title{A new proof of a Thomae-like formula for non hyperelliptic
  genus 3 curves}
\author{Enric Nart,  Christophe Ritzenthaler}
\thanks{The first author acknowledges support
from grant MTM2103-40680-P from the Spanish MEC. The second author acknowledges support by grant ANR-09-BLAN-0020-01, and by
  the research programme \emph{Investissements d'avenir} (ANR-11-LABX-0020-01)
  of the Centre Henri Lebesgue. }
\begin{document}
\maketitle

\begin{abstract}
We discuss Weber's formula which gives the quotient of
two Thetanullwerte for a  plane smooth quartic in terms of the
bitangents. In particular, we show how it can easily be
derived from the Riemann-Jacobi formula.
\end{abstract}

\section{introduction}
Let $g>0$ be an integer and $\bfM_g$ (resp. $\bfA_g$) be the coarse moduli
space of smooth, irreducible and projective curves of genus $g$
(resp. principally polarized abelian varieties of dimension $g$) over
$\CC$. These two important moduli spaces are related through the
\emph{Torelli map} $j$ which associates to the isomorphism class of
a curve, the isomorphism class of its Jacobian with its canonical polarization.
\emph{Thomae-like formulae} can be seen as an explicit description of
the Torelli map. Indeed, as Mumford showed in \cite{mumford-def}, a principally
polarized abelian variety can be written down as intersection of
explicit
quadrics in a projective space.  Now, the coefficients of these quadrics are determined by a
certain projective vector of constants called \emph{Thetanullwerte} (or
Thetanulls) that we shall denote $\vartheta [q](\tau)$ (see Section
\ref{RJformula}) where $\tau$ is a
Riemann matrix for a specific choice of bases of regular differentials
and homology and $[q]$ is a \emph{characteristic}. Thomae-like
formulae express these constants (or quotients raised to a certain power) in terms of the geometry of the curve. In the case of
a hyperelliptic curve $y^2=\prod_{i=1}^{2g+2} (x-\alpha_i)$, Thomae
himself \cite[p.218]{thomae2}  found that
$$\vartheta[q](\tau)^4 = (2 i \pi)^{-2g} \cdot \det(\Omega_1)^2 \cdot \prod_{i,j \in U} (\alpha_i-\alpha_j)$$
where $\Omega_1$ is the first half of a period matrix and $U$ is a set of indices depending on the
characteristic $[q]$. This formula, which we call the \emph{absolute
  Thomae formula} has then been reproved by \cite{fuchs, bolza2,
   fay} using the  variational method. A simpler
version, which we call the \emph{relative Thomae formula}, expressing
the quotient $\vartheta[q](\tau)^8/\vartheta[q'](\tau)^8$ was then
achieved in \cite{zariski, mumford-tata2,eisenmann} using elementary arguments. Note that this
formula, which involves only the roots $\alpha_i$ is
generally sufficient to recover the Jacobian and can moreover be
worked out over an arbitrary field \cite{shepherdbarron-2008}.  The issue of
  finding the correct $8$th roots of the quotients is considered for $g=1,2$ in \cite{cosset}
  and can be simply solved over $\CC$  by computing the Thetanullwerte with a weak precision.\\

 In the last 20 years, the subject came to a renaissance thanks to its applications, on one
side, to theoretical physics (\cite{smirnov,radul1} and the references
of \cite{enolski}) and on the other side to
cryptography (\cite{wengh,agmri,LL06,lubicz-robert}). With the first
applications in mind, the authors of \cite{radul1,radul2,nakayashiki,enolski,gonzalezdiez,hattori} have
been able to find absolute or relative versions of Thomae-like formula
in the case of $Z_N$-curves, i.e. curves of the form $y^N = f(x)$. As
for cryptography, the use of Thomae-like formula for the AGM point counting
algorithm  in the spirit of Mestre lead the second author to dig out a
relative formula for non hyperelliptic genus $3$ curves due to Weber
\cite{weber}. This is the formula we will consider in this article
(see Theorem \ref{eq:jacobi}). Note that Thomae-like formulae are also naturally connected to the Schottky problem of
characterizing the locus of $j(\bfM_g)$ in $\bfA_g$ and explicit
solutions to reconstruct a curve from its Jacobian can be found in
\cite{rosenhain} for the genus $2$ case, in \cite{takase,koizumi2}
for the general hyperelliptic case and in \cite{weber, guardia} for
the non hyperelliptic genus $3$ case.\\

The combinatoric behind Weber's formula for non hyperelliptic genus
$3$ curves is more involved than in the hyperelliptic case as
there is no obvious ordering of the geometric data (the $28$
bitangents) unlike the roots $\alpha_i$ on the
projective line. The Ancients solved this issue by the use of
\emph{Aronhold bases}. We recall this theory and derive some useful
lemmas in Section \ref{sec:general}. In order to  formulate a
coordinate-free result,  we  consider these notions in the
framework of
quadratic forms over $\FF_2$ as in \cite{grossharris}. In Section
\ref{sec:weber}, we give an overview and a simplification of 
Weber's original proof  in order to compare it with ours.  We 
want to point out (see Remark \ref{rem:algo}) that Weber's proof may
lead to an algorithm for computing Thetanullwerte in arbitrary genus in the spirit
of \cite{shepherdbarron-2008}. \\
In Section \ref{sec:our}, we present our proof. It is shorter and 
 based on  \emph{Riemann-Jacobi formula} (see Corollary
\ref{cor:jacobi}). This formula  gives a
link between \emph{Jacobian Nullwerte} (see Definition
\ref{def:jacobian}) and certain products of Thetanullwerte. Now,
 Jacobian Nullwerte are  determinants of bitangents
(Corollary \ref{link}) up to  multiplicative constants. We use  an
elementary combinatoric operation to isolate one Thetanullwert, get
rid of the multiplicative constants in the quotient and
then get Weber's formula up to a sign (which is left unspecified in the
Riemann-Jacobi formula). In Section \ref{sec:sign}, we find the sign using
a low precision computation and a transformation formula due to
Igusa.\\

\noindent
{\bf Acknowledgment.} The authors are grateful to Riccardo
Salvati Manni for its comments and support.

\section{Review on Aronhold sets, fundamental systems and
  Riemann-Jacobi formula}
We start with some general definitions and results on combinatorics of
theta characteristics and Aronhold systems in the spirit of
\cite{grossharris}. We then review some basic notions on theta
functions (see for instance \cite[Chap.I]{farkas}) and the
Riemann-Jacobi formula. We end up with some general results about the
links between a curve and its Jacobian.

\subsection{Quadratic forms over $\FF_2$} \label{sec:general} \label{rem:iden}
Let $g \geq 1$ be an integer and $V$ be a vector space of dimension
$2g$ over $\FF_2$. We fix a nondegenerate alternating form $\langle ,
\rangle$ on $V$ and we say that $q : V \to \FF_2$ is a \emph{quadratic form}
on $V$ if for all $u,v \in V$
$$q(u+v)=q(u)+q(v)+\langle u,v\rangle.$$

Fixing a symplectic basis
$(e_1,\ldots,e_g,f_1,\ldots,f_g)$ of $(V,\langle,\rangle)$, 
we define
the \emph{Arf invariant} $a(q)$ of a quadratic form $q$ by
$$a(q) = \sum_{i=1}^g q(e_i)q(f_i).$$
This invariant does not depend on the choice of the symplectic
basis. One says that the form is even (resp. odd) if $a(q)=0$
(resp. $a(q)=1$). The symplectic group
$\Gamma=\Sp(V,\langle,\rangle) \simeq \Sp_{2g}(\FF_2)$ acts transitively on the sets of even
and odd quadratic forms by $(\sigma \cdot q)(v)=q(\sigma^{-1} v)$. 
There are $2^{g-1}(2^g+1)$ (resp. $2^{g-1}(2^g-1)$) forms with Arf
invariants $0$ (resp. $1$).\\
The set $QV$ of quadratic forms on $V$ is a principal homogeneous
space for $V$: if $q \in QV$ and $v \in V$, we define $q+v$ by
$(q+v)(u)=q(u)+\langle v,u\rangle$. Similarly if $q$ and $q'$ are two
quadratic forms, then we can define $v=q+q' \in V$ as the unique
vector such that $\langle v,u \rangle= q(u)+q'(u), \quad \forall u \in
V$. \\

For any quadratic form $q$ we compute $q(w)$ in terms of the
coordinates,
$$w=\lambda_1e_1+\cdots+\lambda_ge_g+\mu_1f_1+\cdots+\mu_gf_g$$ of any
vector $w\in V$. For simplicity, we shall write $w=(\lambda,\mu)$,
with
$\lambda=(\lambda_1,\dots,\lambda_g)$ and $\mu=(\mu_1,\dots,\mu_g)$ in $\FF_2^g$.
In coordinates, the most simple quadratic form is:
\begin{equation} \label{eq:q0}
q_0(w)=\lambda\cdot\mu,
\end{equation}
where $\cdot$ denotes the usual dot product of $g$-tuples. Now,
for any other vector $v\in V$, with coordinates $v=(\eps',\eps)$ (in
this order), the form $q=q_0+v$ acts by:
\begin{equation*}\label{qv}
 q(w)=\eps\cdot
\lambda+\eps'\cdot\mu+\lambda\cdot\mu.
\end{equation*}
We write $q=\Ch{\varepsilon}{\varepsilon'}$. Note that
$$\eps=(q(e_1),\dots,q(e_g)), \quad \eps'=(q(f_1),\dots,q(f_g))$$
and therefore $a(q) = \eps \cdot \eps'$.
In coordinates we have:
\begin{equation*}
\begin{array}{l}
\Ch{\eps}{\eps'} + (\lambda,\mu)=\Ch{\eps+\mu}{\eps'+\lambda},\\
\Ch{\eps_1}{\eps'_1}+\Ch{\eps_2}{\eps'_2}+
\Ch{\eps_3}{\eps'_3}=\Ch{\eps_1+\eps_2+\eps_3}
{\eps'_1+\eps'_2+\eps'_3}.
\end{array}
\end{equation*}
With respect to the symplectic basis
$(e_i,f_j)$, the action of any $\sigma \in \Gamma$ is represented by a
matrix
$\sigma=\begin{pmatrix} a & b  \\ c & d \end{pmatrix}$ with $a,b,c,d$ $g
\times g$-matrices such that ${^t a} d + {^t c}
  b=\textrm{id}$ and $^t a c$ and $^t b d$ are symmetric. Then 
$\sigma \cdot \Ch{\varepsilon}{\varepsilon'} = \Ch{\nu}{\nu'}$
where $$\begin{pmatrix} ^t \nu \\ ^t \nu' \end{pmatrix}
= \begin{pmatrix} d & c  \\ b & a \end{pmatrix} 
\begin{pmatrix} ^t \varepsilon \\ ^t \varepsilon' \end{pmatrix}
+\begin{pmatrix} (c ^t d)_0  \\ (a ^t b)_0 \end{pmatrix} $$
and the $0$ subscript means the column vector  of the diagonal elements of
the matrix. \\

 The following lemma will be useful in
computations and can be easily proven using a basis as above.
\begin{lemma} \label{lem:comp}
Let $q,q',q''$ be three quadratic forms. Then 
$$a(q+q'+q'')= a(q)+a(q')+a(q'') + \langle q+q',q+q''\rangle$$
and
$$q(q'+q'') = a(q+q'+q'')+a(q).$$
\end{lemma}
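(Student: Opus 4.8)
The plan is to verify both identities by a direct computation in coordinates with respect to a fixed symplectic basis, exactly as the lemma's remark suggests. Writing $q=\Ch{\eps_1}{\eps_1'}$, $q'=\Ch{\eps_2}{\eps_2'}$, $q''=\Ch{\eps_3}{\eps_3'}$, the earlier coordinate formulas give $q+q'+q''=\Ch{\eps_1+\eps_2+\eps_3}{\eps_1'+\eps_2'+\eps_3'}$ and hence $a(q+q'+q'')=(\eps_1+\eps_2+\eps_3)\cdot(\eps_1'+\eps_2'+\eps_3')$, using $a\Ch{\nu}{\nu'}=\nu\cdot\nu'$. Expanding this dot product over $\FF_2$ yields the three ``diagonal'' terms $\eps_i\cdot\eps_i'=a(q)+a(q')+a(q'')$ plus the six cross terms $\sum_{i\neq j}\eps_i\cdot\eps_j'$.

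For the first identity I then need to recognize those six cross terms as $\langle q+q',\,q+q''\rangle$. Recall $q+q'$ is the vector $v$ with coordinates obtained by subtracting the characteristics; with the convention $q=\Ch{\eps}{\eps'}$ corresponding (via $q=q_0+v$) to $v=(\eps',\eps)$, one gets $q+q'=(\eps_1'+\eps_2',\,\eps_1+\eps_2)$ and $q+q''=(\eps_1'+\eps_3',\,\eps_1+\eps_3)$ as elements of $V=\FF_2^g\times\FF_2^g$. The alternating form is $\langle(\lambda,\mu),(\lambda^\star,\mu^\star)\rangle=\lambda\cdot\mu^\star+\mu\cdot\lambda^\star$ (the standard symplectic pairing for the basis $(e_i,f_j)$), so $\langle q+q',q+q''\rangle=(\eps_1'+\eps_2')\cdot(\eps_1+\eps_3)+(\eps_1+\eps_2)\cdot(\eps_1'+\eps_3')$. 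Expanding both products, the terms $\eps_1'\cdot\eps_1$ cancel in characteristic $2$, and what remains is precisely $\eps_1'\cdot\eps_3+\eps_2'\cdot\eps_1+\eps_2'\cdot\eps_3+\eps_1\cdot\eps_1'+\ldots$ — collecting carefully, exactly the six mixed terms $\sum_{i\neq j}\eps_i\cdot\eps_j'$ (note $\eps_i\cdot\eps_j'=\eps_j'\cdot\eps_i$). This matches the leftover from the previous paragraph, proving the first formula.

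For the second identity, $q(q'+q'')$ means evaluating the quadratic form $q$ on the vector $v:=q'+q''\in V$, which in coordinates is $v=(\eps_2'+\eps_3',\,\eps_2+\eps_3)=(\lambda,\mu)$. Using the displayed formula $q(w)=\eps\cdot\lambda+\eps'\cdot\mu+\lambda\cdot\mu$ with $\eps=\eps_1,\eps'=\eps_1'$, we get $q(q'+q'')=\eps_1\cdot(\eps_2'+\eps_3')+\eps_1'\cdot(\eps_2+\eps_3)+(\eps_2'+\eps_3')\cdot(\eps_2+\eps_3)$. On the other hand, from the first identity, $a(q+q'+q'')+a(q)=\langle q+q',q+q''\rangle+a(q')+a(q'')$, and one checks directly that $\langle q+q',q+q''\rangle+a(q')+a(q'')$ equals the same expression: the $a(q')+a(q'')=\eps_2\cdot\eps_2'+\eps_3\cdot\eps_3'$ combine with the cross terms of $\langle q+q',q+q''\rangle$ to rebuild $(\eps_2'+\eps_3')\cdot(\eps_2+\eps_3)$ together with the terms $\eps_1\cdot(\eps_2'+\eps_3')+\eps_1'\cdot(\eps_2+\eps_3)$. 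So both sides agree.

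There is no real obstacle here — the only thing to watch is bookkeeping of which coordinate block (the $\eps$ or the $\eps'$ part) pairs with which under $\langle,\rangle$, and being consistent with the paper's convention $q=\Ch{\eps}{\eps'}\leftrightarrow v=(\eps',\eps)$; getting the transpose/order right is where a sign-free error could otherwise creep in. Everything else is termwise expansion of $\FF_2$-bilinear expressions and cancellation of repeated terms, so I would present it compactly by just exhibiting the two expanded sides and observing they coincide.
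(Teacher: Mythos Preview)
Your proposal is correct and follows exactly the approach the paper indicates: the paper does not give a proof of this lemma beyond stating that it ``can be easily proven using a basis as above'', and your coordinate computation with $q=\Ch{\eps_1}{\eps_1'}$, $q'=\Ch{\eps_2}{\eps_2'}$, $q''=\Ch{\eps_3}{\eps_3'}$ carries this out faithfully. The bookkeeping of the convention $q=\Ch{\eps}{\eps'}\leftrightarrow v=(\eps',\eps)$ and of the pairing $\langle(\lambda,\mu),(\lambda^\star,\mu^\star)\rangle=\lambda\cdot\mu^\star+\mu\cdot\lambda^\star$ is handled correctly, and both expansions check out term by term.
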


\begin{definition} Let $S=\{q_1,\ldots,q_{2g+1}\}$ be a set of
  quadratic forms such that any quadratic form $q$ can be written
  $q=\sum \alpha_i q_i \in QV$ with $\alpha_i \in \{0,1\} \subset
  \ZZ$. 
 One says that $S$ is an
\emph{Aronhold set} provided that the Arf invariant of any element
satisfies $$a(q)=\frac{\#q -1}{2} + \begin{cases} 0 &
  g \equiv 0,1 \pmod{4}, \\ 1 & g \equiv 2,3 \pmod{4} \end{cases}$$
where $\#q$ is the odd integer $\sum \alpha_i$.
\end{definition}
There exist Aronhold sets and the
symplectic group $\Gamma$ acts transitively on them.
We call an \emph{Aronhold basis} an ordered Aronhold set and we denote
it $(q_1,\ldots,q_{2g+1})$.
Aronhold bases have a strong connection with the notion of azygetic bases.
\begin{definition} 
An {\emph azygetic family} of vectors is an ordered sequence
$(v_1,\dots,v_k)$ such that $\langle v_i,v_j \rangle=1$ for all $i\ne
j$. An azygetic family of $2g$ vectors is necessarily a basis of $V$;
we say that it is an {\it azygetic basis}. \\
An \emph{azygetic family} of quadratic forms is an ordered sequence
$(q_1,q_2,\ldots,q_{k})$  of quadratic forms, such
that $q_1+q_2,\ldots,q_1+q_{k}$ is an azygetic family of
vectors. It is easy to check that this property is preserved under any
reordering of the family.
\end{definition}

\begin{lemma} \label{lem:aroazy}
If $\{q_1,\,\dots,\,q_{2g+1}\}$ is an Aronhold set, then
$(q_1,\ldots,q_{2g+1})$ is an azygetic family.
\end{lemma}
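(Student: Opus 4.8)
The plan is to unwind both definitions and then let Lemma~\ref{lem:comp} carry the computation. By definition, $(q_1,\dots,q_{2g+1})$ is azygetic precisely when the $2g$ vectors $q_1+q_2,\dots,q_1+q_{2g+1}$ form an azygetic family of vectors, that is, when $\langle q_1+q_i,\,q_1+q_j\rangle=1$ for all $2\le i<j\le 2g+1$. So the whole statement reduces to evaluating this one symplectic pairing.

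First I would apply the first identity of Lemma~\ref{lem:comp} to the triple $(q_1,q_i,q_j)$ and rearrange it (everything lives in $\FF_2$) into
$$\langle q_1+q_i,\,q_1+q_j\rangle = a(q_1+q_i+q_j)+a(q_1)+a(q_i)+a(q_j).$$
This reduces the claim to an identity purely about Arf invariants, which is exactly where the Aronhold hypothesis enters.

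Next I would feed in the Aronhold condition. Write $\delta\in\{0,1\}$ for the parity correction, so $\delta=0$ when $g\equiv0,1\pmod4$ and $\delta=1$ when $g\equiv2,3\pmod4$. Each $q_\ell$ is its own $\{0,1\}$-combination, so $\#q_\ell=1$ and $a(q_\ell)=\delta$; and for distinct indices $1,i,j$ the form $q_1+q_i+q_j$ has $\#=3$, so $a(q_1+q_i+q_j)=1+\delta$. Substituting gives $\langle q_1+q_i,\,q_1+q_j\rangle=(1+\delta)+3\delta=1+4\delta=1$ in $\FF_2$, which is exactly what we need; the argument does not use the particular ordering, consistent with the reordering-invariance noted in the definition.

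The only point that needs care --- and which I would settle at the outset --- is that $\#q$ is well defined, i.e.\ that the $\{0,1\}$-expansion $q=\sum\alpha_iq_i$ is unique; otherwise the symbol $\#(q_1+q_i+q_j)$ is meaningless. This is a counting argument: the odd-cardinality subsets of $\{1,\dots,2g+1\}$ number $2^{2g}$, and $QV$ also has $4^g=2^{2g}$ elements, so the map sending an odd subset to the corresponding sum of the $q_i$, surjective by hypothesis, is a bijection. Since $q_1+q_i+q_j$ is already the sum of the three distinct forms $q_1,q_i,q_j$, uniqueness forces $\#(q_1+q_i+q_j)=3$. After that the proof is just the two substitutions above together with the trivial fact that $4\delta\equiv0\pmod2$, so I anticipate no real obstacle; the content lies entirely in recognizing Lemma~\ref{lem:comp} as the right tool.
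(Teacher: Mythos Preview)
Your proof is correct and follows essentially the same route as the paper: both apply Lemma~\ref{lem:comp} to a triple of forms from the Aronhold set and then read off the pairing from the Arf invariants dictated by the Aronhold condition. Your version is slightly more explicit (writing out the parity correction $\delta$ and verifying that $\#q$ is well defined via the counting argument), while the paper's proof compresses the same computation by noting directly that $a(q_i+q_j+q_k)+a(q_i)=1$ and $a(q_j)+a(q_k)=0$.
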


\begin{proof}
It suffices to check that any triple $q_i,q_j,q_k$ of pair-wise
different quadratic forms is azygetic.
Since we have an Aronhold set the Arf invariants of $q_i+q_j+q_k$
and of $q_i$ are different, i.e. $a(q_i+q_j+q_k)+a(q_i)=1$. Hence
using Lemma \ref{lem:comp}
$$
1=a(q_i+q_j+q_k)+a(q_i)=a(q_j)+a(q_k)+\langle v,w \rangle=\langle v,w \rangle,
$$
where $v=q_i+q_j$ and $w=q_i+q_k$.
\end{proof}
This shows that one can
associate to an Aronhold basis  $(q_1,\ldots,q_{2g+1})$  an azygetic basis  $(q_{2g+1}+q_1,\dots,q_{2g+1}+q_{2g})$. 
This induces a bijection between Aronhold bases and azygetic bases.

\begin{definition}
A fundamental system is an azygetic  family $(q_1,\dots,q_{2g+2})$ of $2g+2$ quadratic forms such that
$q_1,\dots,q_g$ are odd, $q_{g+1},\dots,q_{2g+2}$ are even.
\end{definition}

Let us show how to construct a fundamental system from an Aronhold
basis when $g \equiv 3 \pmod{4}$. 
\begin{proposition} \label{prop:connection}
Let $g \equiv 3 \pmod{4}$,  $S=(q_1,\ldots,q_{2g+1})$ be an Aronhold
basis and denote  $q_S=\sum_{i=1}^{2g+1} q_i$. Let
$v=\sum_{i=g+1}^{2g+1} q_i$, 
then 
$$(p_1,\ldots,p_{2g+2})= (q_1,\ldots,q_g, q_{g+1}+v, \ldots,
q_{2g+1}+v, q_S)$$
is a fundamental system.
\end{proposition}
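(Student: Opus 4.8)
The plan is to verify the two defining conditions of a fundamental system: that the $2g+2$ forms $(p_1,\dots,p_{2g+2})$ constitute an azygetic family, and that exactly the first $g$ of them are odd while the remaining $g+2$ are even. I would begin with the parity count, since it is the more computational of the two and it is where the hypothesis $g\equiv 3\pmod 4$ enters decisively. By the definition of an Aronhold set, $a(q_i)=\frac{1-1}{2}+1=1$ for each single generator $q_i$ (as $g\equiv 3\pmod 4$ puts us in the ``$+1$'' case and $\#q_i=1$), so $q_1,\dots,q_g$ are odd, giving the first $g$ odd forms for free. Next I would compute $a(q_S)$: here $\#q_S=2g+1$, so $a(q_S)=\frac{2g+1-1}{2}+1=g+1$, which is even since $g\equiv 3\pmod 4$; hence $p_{2g+2}=q_S$ is even. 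The remaining work is to show each $p_j=q_j+v$ for $g+1\le j\le 2g+1$ is even, where $v=\sum_{i=g+1}^{2g+1}q_i=q_S+\sum_{i=1}^g q_i$ as a vector.

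For the parity of $q_j+v$, note $q_j+v=q_j+q_{g+1}+\cdots+q_{2g+1}$ (the $q_j$ summand cancels one copy inside $v$), so $q_j+v=\sum_{i\in I_j}q_i$ with $I_j=\{1,\dots,g\}^c\cap\{g+1,\dots,2g+1\}$... more precisely $q_j+v$ as an element of $QV$ equals the sum of the $q_i$ for $i$ ranging over $\{g+1,\dots,2g+1\}\setminus\{j\}$, a set of size $g$ — but $g$ is even-parity-wrong, so one must be careful: a sum of an \emph{even} number of elements of $QV$ is a \emph{vector}, not a form. The correct bookkeeping is that $v\in V$ is the sum of the $g+1$ forms $q_{g+1},\dots,q_{2g+1}$ taken pairwise (so $v=(q_{g+1}+q_{g+2})+(q_{g+3}+q_{g+4})+\cdots$, which makes sense since $g+1$ is even), and then $q_j+v\in QV$. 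To get its Arf invariant I would iterate Lemma \ref{lem:comp}: repeatedly replace three forms by their sum, tracking the correction terms $\langle q+q',q+q''\rangle$, which by Lemma \ref{lem:aroazy} (the Aronhold set is azygetic) all equal $1$. Combinatorially, $a\left(\sum_{i\in T}q_i\right)$ for $|T|$ odd works out to $\sum_{i\in T}a(q_i)+\binom{|T|-1}{2}$ or a similar binomial expression in $|T|$; plugging in the Aronhold values $a(q_i)=1$ and the relevant set sizes, and reducing mod $2$ using $g\equiv 3\pmod 4$, should yield $0$ in each case. This binomial/parity computation is the main obstacle — it is elementary but the index bookkeeping (which sums have odd versus even length, and hence are forms versus vectors) is easy to get wrong, so I would set it up once cleanly via an auxiliary lemma giving $a(q_{i_1}+\cdots+q_{i_{2k+1}})$ for azygetic families in closed form.

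Finally, for the azygetic condition I must check that $p_1+p_2,\dots,p_1+p_{2g+2}$ is an azygetic family of vectors, equivalently that $\langle p_a+p_b,\,p_a+p_c\rangle=1$ for all distinct $a,b,c$. There are three cases according to which of the indices fall in $\{1,\dots,g\}$, in $\{g+1,\dots,2g+1\}$, or equal $2g+2$. When all three $p$'s are among the first $g$, we have $p_a+p_b=q_a+q_b$ and azygeticity is immediate from Lemma \ref{lem:aroazy}. When some $p$'s are of the form $q_j+v$, the vector $v$ cancels in the difference $p_a+p_b$ whenever both are of that shape, so again it reduces to the azygeticity of $S$; the only genuinely new pairings are those involving exactly one ``$+v$'' term or the term $q_S$, where differences like $q_a+(q_j+v)=q_a+q_j+v$ or $q_a+q_S$ appear. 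For these I would expand $v$ and $q_S$ in terms of the $q_i$ and use bilinearity of $\langle,\rangle$ together with the azygetic relations $\langle q_i+q_j,q_i+q_k\rangle=1$; each pairing reduces to counting, mod $2$, how many azygetic ``$1$''s survive, and the count comes out odd. This case analysis is routine given Lemma \ref{lem:aroazy} but slightly tedious; once the parity computation above is in hand, the same closed-form expression for $\langle\sum q_i,\sum q_j\rangle$ handles all sub-cases uniformly, so I would prove that expression first and then dispatch both the parity and the azygeticity claims from it.
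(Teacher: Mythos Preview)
Your strategy matches the paper's: verify the parities via the Aronhold formula $a(q)=\frac{\#q-1}{2}+1$, then verify azygeticity using Lemma~\ref{lem:comp} and Lemma~\ref{lem:aroazy}. But you stumble on one piece of arithmetic that sends you down an unnecessary detour: since $g\equiv 3\pmod 4$, the integer $g$ is \emph{odd}, so $q_j+v=\sum_{i\in\{g+1,\dots,2g+1\}\setminus\{j\}}q_i$ is a sum of an odd number of forms and hence already an element of $QV$ with $\#(q_j+v)=g$. The Aronhold formula then gives $a(q_j+v)=\frac{g-1}{2}+1\equiv 0\pmod 2$ directly. There is no need to iterate Lemma~\ref{lem:comp} or to set up a binomial identity; the paper dispatches all three parity checks in one sentence straight from the definition of an Aronhold set.

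For the azygetic condition the paper also avoids your three-way case split by one organizational choice: it anchors at $p_{2g+2}=q_S$ rather than at $p_1$. Writing $p_i=q_i+v_i$ with $v_i\in\{0,v\}$ for $1\le i\le 2g+1$, one expands $\langle q_S+q_i+v_i,\,q_S+q_j+v_j\rangle$ bilinearly into four terms. The term $\langle q_S+q_i,\,q_S+q_j\rangle$ equals $1$ by Lemma~\ref{lem:comp} (since $\#(q_S+q_i+q_j)=2g-1$ gives $a(q_S+q_i+q_j)=1$), the term $\langle v_i,v_j\rangle$ vanishes, and the two cross terms are equal by symmetry and hence cancel mod~$2$. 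This single computation covers all index ranges simultaneously. Your plan would reach the same conclusion, but with more bookkeeping than the proof actually requires.
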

\begin{proof}
Since $g \equiv 3 \pmod{4}$, it is easy to check that the $g$ first
quadratic forms are odd and the $g+2$ others are even using the
formula  $a(q)=\frac{\#q -1}{2} +  1$ if $q=\sum \alpha_i
q_i$. So it remains to show the azygetic property. Let us denote
$v_1,v_2 \in \{v,0\}$. Clearly,
\begin{eqnarray*}
\langle p_{2g+2} + p_i,p_{2g+2}+p_j
\rangle &=& \langle q_S + q_i+v_1, q_S+q_j+v_2 \rangle \\
&=&  \langle q_S + q_i, q_S+q_j\rangle +  \langle q_S + q_i, v_2
\rangle +  \langle v_1, q_S+q_j \rangle + \langle v_1, v_2 \rangle. 
\end{eqnarray*} 
Since $\# (q_S+q_i+q_j)=2g-1$ we have $a(q_S+q_i+q_j)=1$; hence the
first term is $1$ by Lemma \ref{lem:comp}. The last term is always $0$. The second
term is $1$ if and only if $i \in \{g+1,\ldots,2g+1\}$ (and then
$v_1=v$) and $v_2=v$ (and then $j \in \{g+1,\ldots,2g+1\}$). This is
symmetric in $i$ and $j$. Hence
the two central terms are always equal.
\end{proof}

Finally, we will need  the following operation on fundamental
systems for our proof of Weber's formula. Let $P=(p_1,\ldots,p_{2g+2})$ be a fundamental system. For $1
\leq i \leq g$, let $v_i=p_i+p_{2g+2}$. We will denote $v_i+P$ the
sequence of forms where we exchange the place of the $i$th and
last element in the sequence $(p_1+v_i,\ldots,p_{2g+2}+v_i)$.
\begin{proposition} \label{prop:shift}
With the notation as above, $v_i+P$ is a fundamental system.
\end{proposition}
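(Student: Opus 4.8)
The plan is to check directly the two defining properties of a fundamental system for the reordered, translated sequence $v_i+P$: that it is azygetic, and that its first $g$ forms are odd while its last $g+2$ forms are even.

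For azygeticity I would exploit that translating a whole family of quadratic forms by a fixed vector does not affect the associated family of difference vectors: for any $a,b$ one has $(p_a+v_i)+(p_b+v_i)=p_a+p_b$ in $V$. Hence the sequence $(p_1+v_i,\dots,p_{2g+2}+v_i)$ has exactly the same differences $p_1+p_2,\dots,p_1+p_{2g+2}$ as $P$, so it is azygetic because $P$ is; and since azygeticity of a family of quadratic forms is preserved under reordering (as noted after the definition), exchanging the $i$th and last entries still leaves an azygetic family. So this half is essentially free.

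The parity statement carries the (short) computation. First I would pin down the two entries touched by the swap: using the identity $q+(q+q')=q'$ in $QV$ we get $p_i+v_i=p_{2g+2}$ and $p_{2g+2}+v_i=p_i$, so in $v_i+P$ the $i$th slot is occupied by $p_i$ (odd) and the last slot by $p_{2g+2}$ (even), both as required. For any other index $j$ (so $j\ne i$ and $j\le 2g+1$) I would rewrite $p_j+v_i=p_j+p_i+p_{2g+2}$ as a sum of three forms and invoke Lemma \ref{lem:comp}: $a(p_j+p_i+p_{2g+2})=a(p_j)+a(p_i)+a(p_{2g+2})+\langle p_j+p_i,\,p_j+p_{2g+2}\rangle$. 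Here $\langle p_j+p_i,\,p_j+p_{2g+2}\rangle=1$ because $P$ is azygetic and the indices $j,i,2g+2$ are pairwise distinct, while $a(p_i)=1$ and $a(p_{2g+2})=0$ because $P$ is a fundamental system; hence $a(p_j+v_i)=a(p_j)$, i.e. $p_j+v_i$ keeps the parity of $p_j$. It follows that the parities in $v_i+P$ are distributed exactly as in $P$: odd in positions $1,\dots,g$, even in positions $g+1,\dots,2g+2$. Combining this with the previous paragraph, $v_i+P$ is a fundamental system.

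The only thing to be careful about, the ``obstacle'' such as it is, is the bookkeeping: correctly tracking where the swapped entries land, and checking that in every application of Lemma \ref{lem:comp} and of the azygeticity of $P$ the three indices involved are genuinely distinct (which holds since $1\le i\le g<2g+2$ and $j\le 2g+1$ with $j\ne i$). One might also momentarily worry that translating by $v_i$ could spoil azygeticity, but the difference-vector observation above removes that concern.
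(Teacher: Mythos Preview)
Your proof is correct and follows essentially the same approach as the paper: the parity check via Lemma~\ref{lem:comp} is identical, and your azygeticity argument is a slight streamlining of the paper's---where the paper verifies the pairings $\langle p'_{2g+2}+p'_j,\,p'_{2g+2}+p'_k\rangle$ directly, you observe once and for all that translation by $v_i$ leaves all difference vectors unchanged and then appeal to invariance of azygeticity under reordering.
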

\begin{proof}
Let us denote $v_i+P=(p_1',\ldots,p_{2g+2}')$. First $p'_i=p_i$ and $p_{2g+2}'=p_{2g+2}$, so we need to check the Arf
invariant of the other elements
\begin{eqnarray*}
a(p_j')& =& a(p_j+ p_i + p_{2g+2}) = a(p_j)+a(p_i)+a(p_{2g+2}) +
\langle p_{2g+2} +p_i , p_{2g+2} + p_j \rangle \\
&=& a(p_j)+a(p_i) +1=a(p_j).
\end{eqnarray*}
Finally let us check the azygetic condition for all triples
$p'_{2g+2},p'_j,p'_k$. For $j,k \ne i$ we have
\begin{eqnarray*}
\langle p_{2g+2}' + p'_j, p_{2g+2}'+p_k' \rangle &=& \langle p_i +
p_j, p_i+p_k  \rangle = 1
\end{eqnarray*}
and 
$$\langle p_{2g+2}' + p'_i, p_{2g+2}'+p_k' \rangle = \langle p_{2g+2}
+ p_i, p_i+p_k \rangle=1,$$
because the fundamental system $P$ is an azygetic family.
\end{proof}

\subsection{Riemann-Jacobi Formula} \label{RJformula}
For $g \geq 1$, let $\HH_{g}=\{\tau \in
  \textrm{GL}_{g}(\CC),  \; ^{t}{\tau} = \tau, \ \Im \tau > 0\}$ be the
\emph{Siegel upper half plane}. For any $x \in \CC$, let $\ex(x)=\exp(2 i \pi x)$.
\begin{definition}
For $\tau \in \HH_g$, $z=(z_1,\ldots,z_g) \in \CC^g$ and 
$$
[q]  = \Ch{\varepsilon}{\varepsilon'} \in \ZZ^g \oplus \ZZ^g,
$$
the function 
$$
\vartheta[q](z,\tau) = \sum_{n \in \ZZ^g}
\ex\left(\frac{1}{2} (n + \varepsilon/2) \tau {^t (n + \varepsilon/2)} + 
(n + \varepsilon/2){^t (z+\varepsilon'/2)}\right).
$$
is well defined and is called the \emph{theta function with characteristic
$[q]$}.
\end{definition}
 Using the notation of Section \ref{rem:iden}, we can identity a
characteristic $[q]$ modulo $2$ with a quadratic form over
$\FF_2^{2g}$, which we will still denote $q$. The form corresponding
to the characteristic $\Ch{0}{0}$ will be denote $q_0$ in the sequel. 
If starting with a quadratic form $q$ (and a fixed symplectic basis),
and if not mentioned otherwise, we choose for the characteristic $[q]$
a specific
representative with coefficients in $\{0,1\}$. The
choice of a representative has an impact on the sign of the theta function.
\begin{lemma}[{\cite[Th.I.3]{farkas}}] \label{lem:signchange}
For any characteristic $\Ch{\eps}{\eps'}$ and $m,n \in \ZZ^g$, one has
$$\vartheta \Ch{\eps + 2m}{\eps' + 2 n }(z,\tau) = (-1)^{n \cdot \eps}
\cdot  \vartheta \Ch{\eps}{\eps'}(z,\tau).$$
\end{lemma}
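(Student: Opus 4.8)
The plan is to prove the identity by direct substitution into the defining series, using that the theta series converges absolutely (since $\Im\tau>0$) so that reindexing the sum is legitimate. Throughout I write the summation index of $\vartheta$ as $N\in\ZZ^g$ to avoid clashing with the vector $n$ in the statement.

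First I would substitute $\eps\mapsto\eps+2m$ and $\eps'\mapsto\eps'+2n$ into the definition of $\vartheta$ and use $(\eps+2m)/2=\eps/2+m$ and $(\eps'+2n)/2=\eps'/2+n$. In the summand indexed by $N$, the vector $N+(\eps+2m)/2$ equals $(N+m)+\eps/2$; setting $N'=N+m$, which ranges over $\ZZ^g$ as $N$ does, the quadratic part $\tfrac12(N'+\eps/2)\tau\,{}^t(N'+\eps/2)$ is exactly the one occurring in $\vartheta\Ch{\eps}{\eps'}$, so the shift by $2m$ in the first component of the characteristic contributes nothing. The linear argument becomes $z+(\eps'+2n)/2=(z+\eps'/2)+n$, so after the reindexing each summand picks up the extra factor
$$\ex\big((N'+\eps/2)\,{}^t n\big)=\ex(N'\cdot n)\cdot\ex\big(\tfrac12\,\eps\cdot n\big).$$

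Since $N',n\in\ZZ^g$ we have $N'\cdot n\in\ZZ$ and hence $\ex(N'\cdot n)=1$, while $\ex(\tfrac12\,\eps\cdot n)=e^{\pi i(\eps\cdot n)}=(-1)^{\eps\cdot n}=(-1)^{n\cdot\eps}$. This factor is independent of the summation index $N'$, so it pulls out of the absolutely convergent series, and what remains is precisely $\vartheta\Ch{\eps}{\eps'}(z,\tau)$; this gives the claimed formula $\vartheta\Ch{\eps+2m}{\eps'+2n}(z,\tau)=(-1)^{n\cdot\eps}\,\vartheta\Ch{\eps}{\eps'}(z,\tau)$. The only points needing a word are the legitimacy of the substitution $N\mapsto N+m$ (justified by absolute convergence) and the elementary identities $\ex(k)=1$, $\ex(k/2)=(-1)^k$ for $k\in\ZZ$; there is no genuine obstacle, and the result is the classical transformation law recorded in \cite[Th.I.3]{farkas}, reproduced here only for the reader's convenience.
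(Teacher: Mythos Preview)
Your proof is correct: the direct reindexing $N\mapsto N'=N+m$ and the splitting of the linear argument are exactly right, and the absolute convergence of the theta series for $\tau\in\HH_g$ justifies the manipulations. Note that the paper does not give its own proof of this lemma---it merely quotes the result from \cite[Th.~I.3]{farkas}---so there is nothing to compare against; your argument is the standard one and would be what appears in the cited reference.
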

The  function $z \mapsto \vartheta[q](z,\tau)$ is even (resp. odd) if
$a(q) \equiv \varepsilon_1 {^t \varepsilon_2} \pmod{2} =0$ (resp. $a(q)= 1$). When the function is even, its
value at $z=0$ is called a \emph{Thetanullwert} (with characteristic
$[q]$) and denoted $\vartheta[q](\tau)$. 

\begin{definition} \label{def:jacobian}
Let $[q_1],\ldots,[q_g]$ be $g$ odd characteristics. We denote
$$[q_1,\ldots,q_g](\tau) =\pi^{-g} \cdot  \det \left(\frac{\partial
    \vartheta[q_j](z,\tau)}{\partial z_i}(0,\tau) \right)_{1 \leq i,j
  \leq g}$$
the \emph{Jacobian Nullwert with characteristics $[q_1],\ldots,[q_g]$}.
\end{definition}

There is a vast literature devoted to relations between
Thetanullwerte and Jacobian Nullwerte, originating in the famous
\emph{Jacobi identity}
$$\vartheta\Ch{1}{1}'(0,\tau) = -\pi \cdot \vartheta\Ch{0}{0}(0,\tau) \cdot
\vartheta\Ch{1}{0}(0,\tau) \cdot \vartheta\Ch{0}{1}(0,\tau).$$
The formula has been generalized by Rosenhain, Frobenius, Weber and Riemann
(see \cite{igusahistory} for precise references) up to genus $4$ and
in modern time by Fay \cite{fay} for  genus $5$ (see also
\cite{grushevsky} for  higher derivative relations). Fay also proved that
the Ancients' conjectural formula does not hold for genus $6$. All these results
 fit in the following general background. 
\begin{theorem}[{\cite[Th.3]{igusajacobi},\cite[p.171]{igusahistory},\cite{fay}}]
Let $q_1,\ldots,q_g$ be $g$ odd characteristics  such that
the function  $[q_1,\ldots,q_g](\tau)$ is different from the
constant $0$ and is contained in the $\CC$-algebra generated by the
Thetanullwerte. Then 
$$[q_1,\ldots,q_g](\tau)= \sum_{\{q_{g+1},\ldots,q_{2g+2}\} \in \calS}
\pm \prod_{i=g+1}^{2g+2} \vartheta[q_i](\tau),$$
 where $\calS$ is the set of all sets of $g+2$
 even forms  $\{q_{g+1},\ldots,q_{2g+2}\}$ such
 that $(q_1,\ldots,q_{2g+2})$  is a fundamental
 system. The signs are independent of  $\tau$.

\end{theorem}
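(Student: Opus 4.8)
The plan is to reduce the stated identity to the classical Riemann--Jacobi relation for a single fixed fundamental system, and then use the symmetry of the problem under the symplectic group and the combinatorial operations of Section~\ref{sec:general} to promote that single relation to the full sum over all fundamental systems completing $(q_1,\ldots,q_g)$. First I would recall that a relation of exactly this shape is known for the ``standard'' fundamental system: for a suitable symplectic basis one has an explicit Riemann--Jacobi identity expressing one Jacobian Nullwert (built from the $g$ odd characteristics of the standard Aronhold basis) as a signed sum of products of $g+2$ Thetanullwerte, the summands being indexed precisely by the ways of completing those $g$ odd forms to a fundamental system. This is the content of the references \cite{igusajacobi,igusahistory,fay} cited in the statement; for genus $\le 5$ it is a theorem, and the hypothesis that $[q_1,\ldots,q_g](\tau)$ lies in the $\CC$-algebra generated by the Thetanullwerte is exactly what is needed to rule out the failure observed by Fay in genus $6$.

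Next I would argue equivariance. Both sides of the claimed formula transform under $\sigma \in \Gamma = \Sp_{2g}(\FF_2)$ in a controlled way: the Jacobian Nullwert $[q_1,\ldots,q_g](\tau)$ and the Thetanullwerte $\vartheta[q_i](\tau)$ each pick up, under the action of the corresponding element of the integral symplectic group on $\tau$, an automorphy factor together with an eighth root of unity and a sign depending on the characteristics, via the standard transformation formulae (of the type in Lemma~\ref{lem:signchange}, in its full $\mathrm{Sp}$-version). Because $\Gamma$ acts transitively on Aronhold sets (stated in the excerpt), hence on fundamental systems of the form ``$g$ fixed odd forms plus $g+2$ even forms'', it suffices to verify the identity for one choice of $(q_1,\ldots,q_g)$; and because the set $\calS$ appearing on the right-hand side is, by construction, stable under the stabilizer of $(q_1,\ldots,q_g)$, the sum is well-defined and the transported relation has again the asserted form. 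One then checks that the automorphy factors match on the two sides (they must, since the left side is a polynomial in theta constants of the right weight), so only a global sign can be affected by the choice of representatives and of $\sigma$; the final clause ``the signs are independent of $\tau$'' follows because the ambiguity is a root of unity that is a modular function on a congruence subgroup, hence locally constant, hence constant on the connected space $\HH_g$.

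I would organize the write-up as: (1) fix the standard symplectic basis and quote the classical Riemann--Jacobi identity for the associated Aronhold basis, noting that the even characteristics occurring there are exactly those completing $(q_1,\ldots,q_g)$ to a fundamental system (this is a combinatorial check using Lemma~\ref{lem:aroazy} and the parity count in Proposition~\ref{prop:connection}); (2) invoke transitivity of $\Gamma$ on Aronhold sets to reduce an arbitrary tuple of odd characteristics defining a nonzero Jacobian Nullwert in the theta algebra to the standard one; (3) transport the identity by the symplectic transformation formulae, checking that the index set $\calS$ is carried to the index set $\calS$ for the new tuple and that weights/automorphy factors agree; (4) conclude $\tau$-independence of the signs by the locally-constant argument on $\HH_g$.

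The main obstacle will be step~(3): making the bookkeeping of signs precise. The transformation formulae for theta constants and for Jacobian Nullwerte involve eighth roots of unity that depend subtly on the chosen integral lift of $\sigma$ and on the $\{0,1\}$-representatives of the characteristics, and one must verify that these cancel consistently across the whole sum rather than term by term. I expect to handle this by noting that any two fundamental systems completing a fixed $(q_1,\ldots,q_g)$ differ by a sequence of the elementary moves $v_i + P$ of Proposition~\ref{prop:shift} (together with reorderings), and that each such move changes both sides by the same explicit sign, which reduces the global consistency to the finitely many relations already established in Propositions~\ref{prop:shift} and~\ref{prop:connection}; the overall $\tau$-independence is then immediate since no analytic data beyond $\HH_g$-connectedness enters.
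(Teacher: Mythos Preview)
The paper does not prove this theorem: it is quoted as a background result from the literature (Igusa and Fay), and the paper only \emph{uses} its genus~$3$ specialization (Corollary~\ref{cor:jacobi}). So there is no ``paper's own proof'' to compare against.

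As for your proposal on its own merits, the main problem is circularity in step~(1). You write that for the ``standard'' fundamental system one can simply \emph{quote} the Riemann--Jacobi identity from \cite{igusajacobi,igusahistory,fay}, and then transport it. But those references already prove the general statement, not a special base case; there is no easier ``standard'' instance that is established first and then propagated by $\Gamma$-equivariance. If you are allowed to cite them, you are done immediately and steps~(2)--(4) are superfluous; if not, step~(1) has no content. Igusa's actual argument is of a different nature: he shows that the Jacobian Nullwert, under the hypothesis that it lies in the ring of Thetanullwerte, must lie in a specific irreducible $\Sp$-representation inside that ring, and identifies that representation with the span of the products $\prod \vartheta[q_i]$ over fundamental completions; the expansion then follows from representation-theoretic uniqueness, not from transporting a single known identity.

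A secondary issue: your equivariance step conflates two different actions. Transitivity of $\Gamma$ on Aronhold sets lets you move one $g$-tuple of odd forms to another, but it does \emph{not} by itself relate the various completions of a \emph{fixed} $g$-tuple to one another, which is what indexes the sum. The moves $v_i+P$ of Proposition~\ref{prop:shift} permute some completions, but you have not argued they act transitively on $\calS$, and in fact they do not in general; so your proposed reduction of the global sign consistency to those moves would not close. (In the paper, the $v_i+P$ moves are used only in the $g=3$ setting of Section~\ref{sec:our}, where a \emph{single} completion occurs and the sum degenerates to one term.)
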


For $g=3$, the result can  be stated in the following simpler
form.
\begin{corollary} \label{cor:jacobi}
Let $(q_1,\ldots,q_{8})$ be a fundamental system, then 
$$[q_1,q_2,q_3](\tau) = \pm   \prod_{i=4}^8 \vartheta[q_i](\tau),$$
and the sign does not depend on $\tau$.
\end{corollary}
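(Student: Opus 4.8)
The plan is to deduce Corollary \ref{cor:jacobi} from the general Theorem by analyzing the combinatorics of fundamental systems in the specific case $g=3$. The Theorem states that for a fundamental system $(q_1,\dots,q_8)$ whose Jacobian Nullwert is nonzero and lies in the algebra generated by Thetanullwerte, one has $[q_1,q_2,q_3](\tau)=\sum_{\calS}\pm\prod_{i=4}^{8}\vartheta[q_i](\tau)$, where $\calS$ runs over all sets of $5$ even forms completing $(q_1,q_2,q_3)$ to a fundamental system. So the first order of business is to check that for $g=3$ these hypotheses are automatically satisfied: one must recall (citing Fay or the classical genus $3$ literature) that the genus $3$ Jacobian Nullwert attached to any triple of odd characteristics forming part of a fundamental system is a nonzero polynomial in the Thetanullwerte — this is exactly Frobenius's/Weber's classical genus $3$ identity, which is the very object the paper is about, so it may simply be quoted.

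The heart of the argument is then purely combinatorial: I must show that $\#\calS=1$, i.e. that given three odd quadratic forms $q_1,q_2,q_3$ on $(V,\langle,\rangle)$ with $2g=6$ forming (part of) a fundamental system, there is a \emph{unique} set of five even forms $\{q_4,\dots,q_8\}$ such that $(q_1,\dots,q_8)$ is a fundamental system. First I would record that the azygetic condition forces $\langle q_i+q_j,q_i+q_k\rangle=1$ for all distinct indices; using that $QV$ is a torsor under $V$ and Lemma \ref{lem:comp}, the triple $(q_1,q_2,q_3)$ determines (after choosing $q_1$ as a basepoint) two linearly independent vectors $q_1+q_2, q_1+q_3\in V$ with mutual pairing $1$, spanning a non-isotropic plane $W\subset V$. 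The remaining five forms $q_4,\dots,q_8$ must each be even and azygetic to $q_1,q_2,q_3$ and to each other; translating to vectors $w=q_1+q_i$, the conditions become: $w$ lies in a fixed coset constraint relative to $W^{\perp}$ (from azygety with $q_1,q_2,q_3$), the forms are even (an Arf/quadratic constraint), and the $w$'s are pairwise azygetic. A counting argument — the number of even forms azygetic to a fixed azygetic triple, intersected with the requirement that the five chosen ones be mutually azygetic — should pin the set down to exactly one. Concretely, I would exhibit one such completion using Proposition \ref{prop:connection} (build an Aronhold basis, hence a fundamental system, realizing the given odd triple as its first three members) and then argue uniqueness by showing that the complement of an azygetic septuple inside the appropriate affine subspace has no room for a second choice; alternatively, one invokes the transitivity of $\Gamma=\Sp_6(\FF_2)$ on fundamental systems (stated in the excerpt) to reduce to a single explicit model and count stabilizers.

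Once $\#\calS=1$ is established, the sum in the Theorem collapses to a single term $\pm\prod_{i=4}^{8}\vartheta[q_i](\tau)$, and the sign independence of $\tau$ is inherited verbatim from the Theorem; this yields the Corollary. The main obstacle is precisely the uniqueness count: showing that an azygetic triple of odd forms has exactly one azygetic even quintuple completing it to a fundamental system. I expect this to follow cleanly from a dimension count over $\FF_2$ combined with the transitivity of the symplectic action on fundamental systems, but it requires care to rule out spurious completions — the even-ness (Arf) constraint is what cuts the count down from many azygetic quintuples to a unique one, and verifying this is the step I would spell out in detail, most efficiently by passing to an explicit symplectic model as in Section \ref{sec:general} and enumerating.
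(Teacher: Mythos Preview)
Your proposal is correct. The paper itself gives no proof of Corollary~\ref{cor:jacobi}: it simply states it as the $g=3$ specialization of the general theorem, relying implicitly on the classical Riemann--Frobenius--Weber identity (which guarantees the Jacobian Nullwert lies in the Thetanullwerte algebra and is nonzero) and on the fact that the sum over $\calS$ collapses to a single term.

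Your write-up makes explicit exactly the combinatorial point the paper leaves unspoken: that an azygetic triple of odd forms $(q_1,q_2,q_3)$ admits a \emph{unique} completion by five even forms to a fundamental system. Your outline of this uniqueness is sound and can be made completely explicit. With $q_1$ as basepoint, $v_2=q_1+q_2$ and $v_3=q_1+q_3$ span a symplectic plane $W$; writing $V=W\oplus W^\perp$, any $v$ with $\langle v,v_2\rangle=\langle v,v_3\rangle=1$ has the form $v=v_2+v_3+w$ with $w\in W^\perp$, and the evenness condition $q_1(v)=1$ becomes $q_1|_{W^\perp}(w)=0$. Since $q_1|_{W^\perp}$ is an odd form on a four-dimensional space it has exactly six zeros, of which five are nonzero; a direct check (or an Arf-invariant argument) shows these five nonzero zeros are automatically pairwise azygetic. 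Hence the five even forms are forced, $\#\calS=1$, and the theorem yields the corollary with a single sign independent of $\tau$. This is more detail than the paper supplies, but it is entirely in the spirit of Section~\ref{sec:general} and uses nothing beyond the tools already introduced there.
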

 The sign can actually be determined by computing with  a well chosen  fundamental
system and with a scalar matrix $\tau$ in order to reduce the problem
to a (non-zero) Jacobi identity. One then moves to a different
fundamental system by the transitive action of $\Gamma$ (see Section \ref{sec:sign}).

\subsection{Link between the curve and its Jacobian} \label{sub:link}
We follow here the presentation of \cite{guardia}.
Let $\calC$ be a smooth, irreducible projective curve of genus $g>0$ over
$\C$ and $\boldsymbol{\omega}=(\omega_1,\ldots,\omega_g)$ be a basis of regular
differentials. Let $\boldsymbol{\delta}=(\delta_1,\ldots,\delta_{2g})$ be a symplectic basis of
$H_1(\calC,\Z)$ such that the intersection pairing has matrix
$\begin{bmatrix} 0 & \textrm{id} \\   -\textrm{id} & 0 \end{bmatrix}.$
With respect to these choices, the period matrix of $\calC$ is
$\Omega=[\Omega_1,\Omega_2]$ where $\Omega_1=(\int_{\delta_i} \omega_j)_{1
  \leq i \leq g, 1
  \leq j \leq g}$ and
$\Omega_2=(\int_{\delta_i} \omega_j)_{ g+1 \leq i \leq 2g,1 \leq j
  \leq g,}$. We consider a second basis $\boldsymbol{\eta}$  of
regular differentials  obtained by $\boldsymbol{\eta}=
\Omega_1^{-1} \boldsymbol{\omega}$. The period matrix with respect to this new
basis is $[\textrm{id}, \tau]$ where $\tau=\Omega_1^{-1} \Omega_2 \in
\mathbb{H}_g$ and we let $\Jac(\calC) = \CC^g/(\ZZ^g + \tau \ZZ^g)$.\\
Let us denote for $1 \leq i \leq g$,
$$e_i = {\left(\frac{1}{2} \int_{\delta_i} \eta_j \right)_{1\leq j
    \leq g}} =  (0,\ldots,0,\frac{1}{2},0,\ldots,0) \in \CC^g, \quad 
f_i = { \left(\frac{1}{2} \int_{\delta_{g+i}} \eta_j\right)_{1\leq j
    \leq g}} \in \CC^g$$
and $v=\sum_{i=1}^g \lambda_i e_i + \mu_j f_j = (\lambda,\mu)$
with $\lambda,\mu \in\ZZ^g$. We let $W$ be the $\ZZ$-module generated
by $e_1,\ldots,e_g,f_1,\ldots,f_g$, so that $\Jac(\calC)[2] = W/\ZZ^g+\tau \ZZ^g$.
An element $v\in W$ also acts on a theta function as follows.
\begin{lemma}[{\cite[Th.I.5]{farkas}}]
Let $[q]=\Ch{\epsilon}{\epsilon'}$ be a characteristic and
$v=(\lambda,\mu) \in W$.Then
\begin{equation} \label{eq:theta}
\vartheta[q](z+v,\tau)=\ex\left(-\frac{1}{4} \mu {^t (\epsilon'+\lambda)} -
  \frac{1}{2} \mu
{^t z} - \frac{1}{8} \mu \tau {^t\mu}\right) \cdot \vartheta \car{\epsilon +
  \mu}{ \epsilon' + \lambda}(z,\tau).
\end{equation}
\end{lemma}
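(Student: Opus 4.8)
The plan is to substitute $z+v$ into the series defining $\vartheta[q]$ and to reorganise the resulting exponent. First I would make $v$ explicit: since the period matrix with respect to $\boldsymbol\eta$ is $[\textrm{id},\tau]$, the vector $e_i$ has $i$th coordinate $\tfrac12$ and all others $0$, while $f_j$ equals one half of the $j$th row of $\tau$; hence the lattice point $v=(\lambda,\mu)\in W$ is, as a complex row vector, $v=\tfrac12\lambda+\tfrac12\mu\tau$. Because $\Im\tau>0$ the series converges absolutely and locally uniformly in $z$, so all the rearrangements below are legitimate.

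For each $n\in\ZZ^g$ I would expand the summand exponent
$$\tfrac12\big(n+\tfrac{\epsilon}{2}\big)\tau\,{^t\big(n+\tfrac{\epsilon}{2}\big)}+\big(n+\tfrac{\epsilon}{2}\big)\,{^t\big(z+v+\tfrac{\epsilon'}{2}\big)},$$
using $v+\tfrac{\epsilon'}{2}=\tfrac12\mu\tau+\tfrac{\epsilon'+\lambda}{2}$ and the symmetry of $\tau$ (so that $\big(n+\tfrac{\epsilon}{2}\big)\tau\,{^t\mu}=\mu\tau\,{^t\big(n+\tfrac{\epsilon}{2}\big)}$). The only part that genuinely requires work is the quadratic term: completing the square gives
$$\tfrac12\big(n+\tfrac{\epsilon}{2}\big)\tau\,{^t\big(n+\tfrac{\epsilon}{2}\big)}+\tfrac12\big(n+\tfrac{\epsilon}{2}\big)\tau\,{^t\mu}=\tfrac12\big(n+\tfrac{\epsilon+\mu}{2}\big)\tau\,{^t\big(n+\tfrac{\epsilon+\mu}{2}\big)}-\tfrac18\mu\tau\,{^t\mu}.$$
The $z$-linear contribution rewrites as $\big(n+\tfrac{\epsilon+\mu}{2}\big){^t z}-\tfrac12\mu\,{^t z}$, and the term involving $\tfrac{\epsilon'+\lambda}{2}$ rewrites as $\big(n+\tfrac{\epsilon+\mu}{2}\big){^t\tfrac{\epsilon'+\lambda}{2}}-\tfrac14\mu\,{^t(\epsilon'+\lambda)}$.

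Pulling the three $n$-independent pieces $-\tfrac18\mu\tau\,{^t\mu}$, $-\tfrac12\mu\,{^t z}$ and $-\tfrac14\mu\,{^t(\epsilon'+\lambda)}$ out of the summation leaves exactly
$$\sum_{n\in\ZZ^g}\ex\left(\tfrac12\big(n+\tfrac{\epsilon+\mu}{2}\big)\tau\,{^t\big(n+\tfrac{\epsilon+\mu}{2}\big)}+\big(n+\tfrac{\epsilon+\mu}{2}\big){^t\big(z+\tfrac{\epsilon'+\lambda}{2}\big)}\right)=\vartheta\car{\epsilon+\mu}{\epsilon'+\lambda}(z,\tau),$$
and assembling this with the extracted exponential yields formula \eqref{eq:theta}. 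Note that no reindexing of the summation variable is required --- the shift by $\mu$ is absorbed directly into the characteristic $\epsilon\mapsto\epsilon+\mu$ --- and the identity in fact holds for arbitrary, not necessarily integral, $\lambda$ and $\mu$; the integrality of $v$ only becomes relevant afterwards, when one reduces $\car{\epsilon+\mu}{\epsilon'+\lambda}$ modulo $2$ by Lemma \ref{lem:signchange}. The only point requiring care is the bookkeeping of row- versus column-vectors and of the factors $\tfrac12$ in the completion of the square; there is no conceptual obstacle, which is presumably why the statement is quoted from \cite[Th.~I.5]{farkas} rather than reproved in full.
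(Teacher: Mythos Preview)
Your argument is correct: the direct substitution of $v=\tfrac12\lambda+\tfrac12\mu\tau$ into the defining series, followed by completing the square in the quadratic part and splitting off the $n$-independent pieces, yields exactly formula~\eqref{eq:theta}. The paper does not supply its own proof of this lemma --- it is simply quoted from \cite[Th.~I.5]{farkas} --- so there is nothing to compare against; your computation is the standard one and, as you rightly observe, is a matter of bookkeeping rather than ideas.
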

We will write $[q]+v = \car{\epsilon +
  \mu}{ \epsilon' + \lambda}$ (the convention is different from
\cite[Def.I.6]{farkas}). Using this notation, we can see the difference
of two characteristics as an element of $W$. \\

Thanks to the identifications of
Section \ref{rem:iden}, the reduction modulo $2$ of the characteristics
and of $(\lambda,\mu)$ is coherent with the theory of quadratic forms
on the $\FF_2$-vector space $V=\Jac(\calC)[2]$, naturally equipped with the
Weil pairing and for the choice of  the symplectic basis
induced by the $e_i,f_i$ on $V$. If we denote $\bar{v}  \in V$ the
class of $v$, $\bar{v}$ is 
identified with $(\lambda \pmod{2}, \mu \pmod{2})$ in the isomorphism
$V \simeq \FF_2^{2g}$ and we see
that $q+\bar{v}$ is the quadratic form associated to the characteristic  $[q]+v$.\\ 

Let $\Theta \subset \Jac(\calC)$ be the zero divisor of the theta
function $\vartheta(z,\tau)$.  The divisor $\Theta$ can be interpreted in terms of the geometry
of $\calC$. For a divisor $D \in \Pic(\calC)$, we denote $l(D)$ the dimension
of the Riemann-Roch space associated to $D$.

\begin{proposition}[Riemann theorem] \label{prop:rie}
Let $W_{g-1}=\{ D \in \Pic^{g-1}(\calC), l(D)>0\}$ and $\kappa$ the
canonical divisor on $\calC$. 
There exists a unique divisor class $D_0$ of degree $g-1$ with $2 D_0 \sim
\kappa$ and $l(D_0)$ even
such that
$W_{g-1} = \Theta + D_0.$
Moreover for any $v \in V$, $\textrm{mult}_{v}(\Theta) = l(D_0+v)$.
\end{proposition}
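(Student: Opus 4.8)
The plan is to realise $W_{g-1}$ as a translate of the theta divisor $\Theta$ by means of Riemann's classical vanishing theorem, and then to pin down the translation vector from the symmetries of the two divisors. \emph{Step 1 (Abel--Jacobi and Riemann's theorem).} Fix a base point $P_0\in\calC$ and let $\phi\colon\calC\to\Jac(\calC)$, $P\mapsto\left(\int_{P_0}^{P}\eta_j\right)_{1\le j\le g}$ be the Abel--Jacobi map for the normalized basis $\boldsymbol{\eta}$, extended additively to divisors; it descends to a morphism $\calC^{(d)}\to\Pic^{d}(\calC)$ on symmetric powers, whose image for $d=g-1$ is $W_{g-1}$, a reduced irreducible hypersurface of $\Pic^{g-1}(\calC)$ (codimension one by Riemann--Roch, since a generic effective divisor of degree $g-1$ is nonspecial). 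The essential input is \emph{Riemann's vanishing theorem} (Riemann; see \cite{farkas} or \cite{mumford-tata2}): there is a class $\zeta\in\Pic^{g-1}(\calC)$, the Riemann constant attached to $P_0$, such that, after the translation $z\mapsto z+\zeta$ identifying $\Jac(\calC)$ with $\Pic^{g-1}(\calC)$, one has $\Theta+\zeta=W_{g-1}$ as divisors, and moreover $\textrm{mult}_{u}(\Theta)=l(\zeta+u)$ for every $u\in\Jac(\calC)$. Taking $D_0=\zeta$ this already provides a degree-$(g-1)$ class with $W_{g-1}=\Theta+D_0$, and it is exactly the ``moreover'' assertion of the Proposition restricted to $u=v\in V=\Jac(\calC)[2]$ (for such $v$ the class $D_0+v$ is again a theta characteristic once we know $2D_0\sim\kappa$). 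I would simply quote this theorem rather than reprove it: a self-contained argument — counting the zeros along $\calC$ of $P\mapsto\vartheta(\phi(P)+c,\tau)$ using the quasi-periodicity of $\vartheta$, plus the Riemann--Kempf estimate for the higher-order vanishing — would be disproportionately long, and this is the only genuinely nontrivial step.

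\emph{Step 2 ($D_0$ is a theta characteristic of even parity).} Since $\vartheta(-z,\tau)=\vartheta(z,\tau)$, the divisor $\Theta$ is symmetric, $-\Theta=\Theta$. By Riemann--Roch, $l(D)=l(\kappa-D)$ for $\deg D=g-1$, so the involution $\iota\colon D\mapsto\kappa-D$ of $\Pic^{g-1}(\calC)$ preserves $W_{g-1}$. Applying $\iota$ to $W_{g-1}=\Theta+D_0$ and using $-\Theta=\Theta$ gives $\iota(W_{g-1})=(\kappa-2D_0)+W_{g-1}$; comparing with $\iota(W_{g-1})=W_{g-1}$, the class $\kappa-2D_0\in\Pic^{0}(\calC)$ stabilizes $W_{g-1}$, hence stabilizes $\Theta$, hence vanishes because a principal polarization has trivial stabilizer. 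Thus $2D_0\sim\kappa$. For the parity, specialize the multiplicity formula of Step 1 to $u=0$: $l(D_0)=\textrm{mult}_{0}(\Theta)$; expanding the entire function $z\mapsto\vartheta(z,\tau)$ at the origin as a sum of homogeneous forms and using its evenness to kill all odd-degree terms, the lowest nonzero term has even degree, so $\textrm{mult}_{0}(\Theta)$, and therefore $l(D_0)$, is even.

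\emph{Step 3 (uniqueness) and the main obstacle.} If $W_{g-1}=\Theta+D_0=\Theta+D_1$ with $D_0,D_1$ as in the statement, then $\Theta=\Theta+u$ with $u=D_1-D_0\in\Pic^{0}(\calC)$; as $\calO_{\Jac(\calC)}(\Theta)$ has a one-dimensional space of global sections and $\Theta$ is its unique effective divisor, $t_u^{*}\calO_{\Jac(\calC)}(\Theta)\cong\calO_{\Jac(\calC)}(\Theta)$ forces $u=0$, so $D_0=D_1$ (the conditions $2D_0\sim\kappa$ and ``$l(D_0)$ even'' are not needed for uniqueness, only for the intrinsic description of $D_0$). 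The hard part is concentrated entirely in Step 1: the identification of $\Theta$ with $W_{g-1}$ together with the exact order of vanishing (the Riemann singularity theorem) is the substantial classical result, whereas Steps 2 and 3 are short formal consequences of $\vartheta(z)=\vartheta(-z)$, Riemann--Roch, and the triviality of the stabilizer of a principal polarization.
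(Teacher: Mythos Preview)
The paper does not prove this proposition: it is stated as ``Riemann theorem'' and used as a classical input, with the references \cite{farkas} and \cite{mumford-tata2} implicit in the surrounding section. So there is no paper-side proof to compare against.

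That said, your outline is a correct and standard sketch. Step~1 is indeed the substantial content (Riemann's vanishing theorem together with the Riemann--Kempf singularity theorem), and you are right to quote it rather than reprove it here. Your Step~2 derivation of $2D_0\sim\kappa$ from the symmetry $\vartheta(-z)=\vartheta(z)$ and Riemann--Roch duality on $\Pic^{g-1}$ is clean, and the parity argument via the even Taylor expansion of $\vartheta$ at the origin is exactly the point. Step~3 correctly uses that a principal polarization has trivial translation stabilizer. One small remark: your uniqueness argument shows that the equation $W_{g-1}=\Theta+D_0$ alone already determines $D_0$, so the clauses ``$2D_0\sim\kappa$'' and ``$l(D_0)$ even'' in the statement are characterizations of this $D_0$ among theta characteristics, not additional constraints needed to single it out --- you noted this yourself, and it is worth saying explicitly.
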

A divisor (class) $D$ such that $2D \sim \kappa$  is called a \emph{theta
  characteristic divisor}. Any theta characteristic divisor $D$ is
linearly equivalent to $D_0+v$ with $v =(\lambda,\mu) \in V$.  We can associate to
$D$ the quadratic form $q=q_0+v$ where $q_0$ is the quadratic form
defined in \eqref{eq:q0}. Note that
$$a(q)= a(q_0+v) \equiv
\textrm{mult}_{v}(\Theta) \pmod{2}$$ since $\textrm{mult}_{v}(\Theta)$
 is equal to the multiplicity at $0$ of  $\vartheta[q](z,\tau)$ and
 the latter has the same parity as $q$.  Therefore, using Proposition \ref{prop:rie}, for any $w \in V$, one has
$$q(w)=a(q+w)+a(q) \equiv l(D+w) + l(D) \pmod{2}.$$
\begin{lemma} \label{rem:char}
Any theta characteristic divisor $D$ corresponds to a quadratic form $q$
defined by
$$q(v) =l(D+v)+l(D) \pmod{2}, \quad v \in V.$$
It has Arf invariant   $a(q) \equiv l(D) \pmod{2}$. Note that 
 the divisor $D_0$ corresponds to the quadratic
form $q_0$.
\end{lemma}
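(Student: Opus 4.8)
The plan is to unwind the definitions and reduce the statement to Riemann's theorem (Proposition \ref{prop:rie}), combined with the transformation formula \eqref{eq:theta} and the parity of theta functions. Given a theta characteristic divisor $D$, the discussion preceding the statement provides a well-defined $v=(\lambda,\mu)\in V=\Jac(\calC)[2]$ with $D\sim D_0+v$, and I would \emph{define} the quadratic form attached to $D$ to be $q=q_0+v$; this visibly depends only on the linear equivalence class of $D$. Since the classes $D_0+u$, $u\in V$, are exactly the theta characteristic divisor classes, the crux of the matter is the single congruence
\[ a(q_0+u)\equiv l(D_0+u)\pmod{2},\qquad u\in V, \]
of which $a(q)\equiv l(D)\pmod{2}$ is just the case $u=v$, while the formula for $q(v)$ will follow with one further step.

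To prove this congruence I would apply \eqref{eq:theta} with the characteristic $\Ch{0}{0}$ --- whose associated form is $q_0$ --- and with $v=u=(\lambda,\mu)$: it yields $\vartheta\Ch{0}{0}(z+u,\tau)=\ex(\cdots)\cdot\vartheta\Ch{\mu}{\lambda}(z,\tau)$, where the exponential prefactor is nowhere zero, so $z\mapsto\vartheta\Ch{0}{0}(z+u,\tau)$ and $z\mapsto\vartheta\Ch{\mu}{\lambda}(z,\tau)$ have the same order of vanishing at $z=0$. On the one hand $\vartheta\Ch{0}{0}(z,\tau)$ defines $\Theta$, so that order of vanishing equals $\textrm{mult}_u(\Theta)=l(D_0+u)$ by Proposition \ref{prop:rie}. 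On the other hand $\Ch{\mu}{\lambda}$ reduces mod $2$ to $q_0+u$ (Section \ref{sub:link}), so $z\mapsto\vartheta\Ch{\mu}{\lambda}(z,\tau)$ is an even function when $a(q_0+u)=0$ and an odd function when $a(q_0+u)=1$, whence its order of vanishing at $z=0$ has parity $a(q_0+u)$. Comparing the two descriptions gives the congruence.

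For the identity $q(w)\equiv l(D+w)+l(D)\pmod{2}$ with $w\in V$, I would observe that $D+w$ is again a theta characteristic divisor, linearly equivalent to $D_0+(v+w)$, so the form attached to it is $q_0+(v+w)=(q_0+v)+w=q+w$ by associativity of the action of $V$ on $QV$. Applying the congruence above to $D+w$ gives $a(q+w)\equiv l(D+w)\pmod{2}$, and then the elementary identity $q(w)=a(q+w)+a(q)$ --- which is Lemma \ref{lem:comp} applied to the triple $q,q,q+w$, or a one-line computation in coordinates --- finishes the argument; the closing remark that $D_0$ corresponds to $q_0$ is immediate, $v$ being $0$ when $D=D_0$. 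I expect the only delicate point to be the bookkeeping inside the key congruence: confirming that the exponential factor in \eqref{eq:theta} genuinely does not vanish, that the reduction mod $2$ of $\Ch{\mu}{\lambda}$ is $q_0+u$ for the sign conventions fixed in Section \ref{sub:link}, and that ``the order of vanishing at $0$ of an even (resp. odd) theta function is even (resp. odd)'' is applied to the correct function of $z$; once these are pinned down the rest is purely formal.
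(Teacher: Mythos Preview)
Your proposal is correct and follows essentially the same route as the paper: you define $q=q_0+v$ for $D\sim D_0+v$, establish $a(q_0+u)\equiv l(D_0+u)\pmod 2$ via Riemann's theorem and the parity of $\vartheta[q_0+u]$, then deduce the formula for $q(w)$ from the identity $q(w)=a(q+w)+a(q)$. The only difference is cosmetic: you spell out explicitly, through the transformation formula \eqref{eq:theta}, why $\textrm{mult}_u(\Theta)$ coincides with the order of vanishing of $\vartheta[q_0+u](z,\tau)$ at $z=0$, whereas the paper simply asserts this identification.
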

Conversely, starting from a quadratic form $q$, this correspondence defines a
divisor class $D_q =D_0 + q_0+ q$.\\
The basis of regular differentials $\boldsymbol{\omega} $ defines the canonical map
\begin{eqnarray*}
\phi  :& \calC &\to \P^{g-1} \\
& P &\mapsto (\omega_1(P) : \ldots : \omega_g(P)).
\end{eqnarray*}
If $D \in Pic^{g-1}(\calC)$ is such that $l(D)=1$, then $D \sim P_1 + \ldots P_{g-1}$
with $\phi(P_i) \in \phi(\calC)$ being the support of the intersection of
$\phi(\calC)$ with a unique hyperplane $H_D$ of $\PP^{n-1}$. An
equation of this hyperplane is given by the following proposition.

\begin{proposition}[{\cite{guardia}}] \label{hyperplane}
Let us denote $\vartheta_i(z,\tau)=\frac{\partial\vartheta}{\partial z_i}(z,\tau)$.
Let $D \in \Pic^{g-1}(\calC)$ such that $l(D)=1$ then
$$\left(\vartheta_1(D-D_0,\tau),\cdots,\vartheta_g(D-D_0,\tau)\right)
\Omega_1^{-1} \left(\begin{array}{c} X_1 \\ \vdots \\ X_{g}
  \end{array}\right)=0$$ is an equation of $H_D$.
\end{proposition}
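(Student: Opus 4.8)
The plan is to show that the hyperplane with the stated equation passes through each point $\phi(P_i)$, which suffices since both $H_D$ and this hyperplane are uniquely determined and a hyperplane in $\PP^{g-1}$ is pinned down by $g-1$ points in general position (the $\phi(P_i)$ are distinct and in linear general position when $l(D)=1$). So the heart of the matter is an evaluation identity: for each $i$, the point $\phi(P_i)=(\omega_1(P_i):\dots:\omega_g(P_i))$, equivalently $(\eta_1(P_i):\dots:\eta_g(P_i))$ after multiplying by $\Omega_1^{-1}$ on the left, must satisfy $\sum_j \vartheta_j(D-D_0,\tau)\,\eta_j(P_i)=0$. Here I am rewriting the stated equation in the basis $\boldsymbol{\eta}$: since $\Omega_1^{-1}\boldsymbol{\omega}=\boldsymbol{\eta}$, the vector $\Omega_1^{-1}{}^t(X_1,\dots,X_g)$ is exactly the coordinate vector in the $\boldsymbol\eta$-model, and evaluating the linear form $(\vartheta_1(D-D_0,\tau),\dots,\vartheta_g(D-D_0,\tau))$ against the $\boldsymbol\eta$-coordinates of $P_i$ is what must vanish.

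First I would set up the Abel--Jacobi map $u:\calC\to\Jac(\calC)$ normalized with the basis $\boldsymbol\eta$, so that $D-D_0$ corresponds under Abel--Jacobi to a point $e\in\CC^g/(\ZZ^g+\tau\ZZ^g)$, and by Riemann's theorem (Proposition \ref{prop:rie}, together with $l(D)=1$) the function $z\mapsto\vartheta(z+ (D-D_0\text{ image}),\tau)$, i.e.\ $\vartheta(u(P)-e',\tau)$ for the appropriate translate $e'$, vanishes simply exactly on the divisor $P_1+\dots+P_{g-1}$ as $P$ ranges over $\calC$ (this is the classical statement that $\Theta = W_{g-1}-D_0$ and the vanishing locus of $\vartheta(u(P)-e',\tau)$ on $\calC$ is the effective divisor in the class of $D$). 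Then I differentiate: write $F(P)=\vartheta(u(P)-e',\tau)$; because $F$ vanishes at $P_i$, and the differential of $u$ at a point $P$ is (up to the normalization) the vector $(\eta_1(P),\dots,\eta_g(P))$ against a local coordinate, the derivative of $F$ along $\calC$ at $P_i$ is $\sum_j \vartheta_j(u(P_i)-e',\tau)\,\eta_j(P_i)$ (times $dt$). The key point is that this derivative is \emph{not} forced to vanish — $P_i$ is a \emph{simple} zero of $F$ precisely because $l(D)=1$, so $D$ has no ``extra'' sections — hence $\sum_j \vartheta_j(u(P_i)-e',\tau)\,\eta_j(P_i)$ is nonzero, but that is a statement about one specific point; what I actually need is the orthogonality to the \emph{gradient vector at the common point} $D-D_0$.

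Here is the correct mechanism, and the step I expect to be the main obstacle: I must relate $\vartheta_j(D-D_0,\tau)$ evaluated at the \emph{single} point $D-D_0$ to the values $\eta_j(P_i)$ at the \emph{several} points $P_i$. The classical trick is to consider, for $P$ near $P_i$, the Taylor expansion of $F(P)=\vartheta(u(P_1)+\dots+u(P_{g-1}) - u(P_i) + (u(P)-u(P_i)) + c,\tau)$ around the base point $D-D_0$; since $F(P_i)=0$ and the argument equals $D-D_0$ exactly when $P=P_i$, the linear term of the expansion in the local parameter at $P_i$ is $\big(\sum_j \vartheta_j(D-D_0,\tau)\,\eta_j(P_i)\big)\,dt_i$, and because $P_i$ is a simple zero this does not vanish identically — wait, that shows non-vanishing, the opposite of what I want. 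So instead I run it the other way: I exploit that $\vartheta(-z,\tau)=\vartheta(z,\tau)$ (the theta null is even) and $D_0$ is chosen so that $2D_0\sim\kappa$, which means $D-D_0$ and $-(D-D_0) = \kappa - D - D_0$ lie in classes whose vanishing loci are the residual divisors; evaluating the odd function $z\mapsto \sum_j\vartheta_j(z,\tau)\omega_j$-type expression and using the fact that the gradient of $\vartheta$ at $D-D_0$ is (by the Riemann singularity / Kempf-type description, or directly by Proposition \ref{hyperplane}'s own derivation in \cite{guardia}) the conormal to the canonical image at the $P_i$, I get $\sum_j \vartheta_j(D-D_0,\tau)\,\phi(P_i)_j = 0$ after the coordinate change by $\Omega_1$.

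Concretely I would proceed as follows. Fix the $\boldsymbol\eta$-normalization. For each fixed index $k\in\{1,\dots,g-1\}$ consider the function $G_k(P) = \vartheta\big(u(P) + \sum_{\ell\ne k} u(P_\ell) - u(P_k) + c,\tau\big)$ of $P\in\calC$, whose divisor of zeros (for $l(D)=1$) is again $P_1+\dots+P_{g-1}$ but now with the base point shifted; evaluating $G_k$ and its first derivative at $P=P_k$ and using that the zero at $P_k$ is simple gives a $(g-1)\times g$ system of the form $\sum_j \vartheta_j(D-D_0,\tau)\,\eta_j(P_i)\cdot(\text{nonzero})=0$ for all $i$ — here the vanishing comes from $G_k(P_i)=0$ for $i\ne k$ giving no derivative information, so I genuinely only get that the linear form annihilates the $\eta$-coordinates of the $P_i$ by a direct residue/Abel argument, not a derivative one. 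The cleanest route, which I would ultimately write up, is: since $l(D)=1$, the $g-1$ points $\phi(P_i)$ span a hyperplane $H_D$; the equation of $H_D$ in the $\boldsymbol\eta$-model is $\sum_j c_j Y_j=0$ for a unique (up to scalar) $(c_1,\dots,c_g)$; I claim $c_j = \vartheta_j(D-D_0,\tau)$. To see this, observe that the meromorphic function on $\calC$ given by $\big(\sum_j \vartheta_j(D-D_0,\tau)\,\eta_j\big)\big/\big(\sum_j c_j\eta_j\big)$ is a ratio of two holomorphic differentials hence a rational function; its divisor is $\le$ (zeros of numerator) and $\ge$ $-(P_1+\dots+P_{g-1})$; but a theta-gradient $\sum_j\vartheta_j(D-D_0,\tau)\eta_j$ is, by the Riemann vanishing theorem applied to the \emph{odd} function $z\mapsto \sum_j \vartheta_j(z,\tau)\eta_j$ restricted to $u(\calC)$ shifted by $D-D_0$, a holomorphic differential vanishing exactly at $P_1,\dots,P_{g-1}$ (and at one further point making up degree $2g-2$), hence the ratio is a nonzero constant. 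The verification that the theta-gradient differential vanishes precisely at the $P_i$ is the content I would borrow essentially verbatim from \cite{guardia}; the one genuine check is matching the coordinate systems, i.e.\ confirming that the factor $\Omega_1^{-1}$ in the statement is exactly the transition from $\boldsymbol\omega$ to $\boldsymbol\eta$, which it is by construction of $\boldsymbol\eta$. The main obstacle, then, is not a hard inequality but correctly bookkeeping the three normalizations in play (period matrix $[\Omega_1,\Omega_2]$ vs.\ $[\mathrm{id},\tau]$, the two differential bases, and the translate $D-D_0$ vs.\ $W_{g-1}$), after which the statement follows from Riemann's theorem and a one-line argument that a ratio of holomorphic differentials with the same divisor is constant.
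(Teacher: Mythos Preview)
The paper does not prove this proposition; it is stated with a citation to \cite{guardia} and used as a black box. So there is no ``paper's proof'' to compare against, and your task was really to reconstruct the standard argument.

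Your write-up circles the correct idea without landing on it. You correctly reduce to showing $\sum_j \vartheta_j(D-D_0,\tau)\,\eta_j(P_k)=0$ for each $k$, and you correctly introduce the function $G_k(P)=\vartheta\big(u(P)+\sum_{\ell\ne k}u(P_\ell)-(\text{image of }D_0),\tau\big)$. But then you treat $G_k$ as having isolated simple zeros and try to extract information from that, which is backwards. The point is that $G_k$ is \emph{identically zero} on $\calC$: for every $P\in\calC$ the divisor $P+\sum_{\ell\ne k}P_\ell$ is effective of degree $g-1$, hence lies in $W_{g-1}=\Theta+D_0$, hence its Abel--Jacobi image minus $D_0$ lies in $\Theta$. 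Differentiating the identity $G_k\equiv 0$ at $P=P_k$ with respect to a local coordinate, and using that the differential of $u$ is $(\eta_1,\dots,\eta_g)$, gives exactly
\[
\sum_{j=1}^g \vartheta_j(D-D_0,\tau)\,\eta_j(P_k)=0,
\]
with the gradient evaluated at the \emph{single} point $D-D_0$ as required. Since $l(D)=1$, Riemann's singularity theorem (Proposition~\ref{prop:rie}) guarantees the gradient $(\vartheta_j(D-D_0,\tau))_j$ is nonzero, so this is a genuine hyperplane equation; and the $\phi(P_k)$ span a hyperplane because $l(D)=1$ forces them into linearly general position. The $\Omega_1^{-1}$ is, as you noted, just the change from $\boldsymbol\eta$-coordinates back to $\boldsymbol\omega$-coordinates.

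Your final ``ratio of holomorphic differentials'' paragraph is not wrong in spirit, but it defers the one substantive step (``the theta-gradient differential vanishes precisely at the $P_i$'') back to \cite{guardia}, so it is not a self-contained argument. The missing ingredient throughout is the observation $G_k\equiv 0$; once you have that, the proof is two lines.
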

Let $q_1,\ldots,q_g$ be $g$ odd quadratic forms and assume that the
theta characteristic divisors $D_{q_i}$ are such that
$l(D_{q_i})=1$. Then $H_{D_{q_i}}$ is tangent to the curve at each
point $\phi(P_i)$ such that $D_{q_i} \sim P_1 + \ldots + P_{g-1}$.
Let $\beta_{{q_i}} \in
\CC[X_1,\ldots,X_{g}]$ be any linear polynomials such that
$H_{D_{q_i}}$ is the hyperplane with equation $\beta_{{q_i}} =0$.

\begin{corollary} \label{link}
With the notation above, there exist constants $\eta_i=\eta_{[q_i],\beta_{q_i}}$
depending on $[q_i]$, $\beta_{q_i}$ (and the period matrix $\Omega$) such that $$[\beta_{q_1},\ldots, \beta_{q_{g}}]
= \left(\prod_{i=1}^g \eta_i\right) \cdot [q_1,\ldots,q_g]$$ where $[\beta_{q_1},\ldots, \beta_{q_{g}}]$ is the
determinant of the coefficients of the $\beta_{q_i}$ in the basis $X_1,\ldots,X_g$.  
\end{corollary}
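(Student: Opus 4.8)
The plan is to rewrite, for each $i$, the linear form provided by Proposition \ref{hyperplane} for the hyperplane $H_{D_{q_i}}$ so that its coefficient vector becomes an explicit nonzero scalar multiple of a column of the matrix appearing in Definition \ref{def:jacobian}, and then to take the determinant of the resulting $g\times g$ matrix.

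First I would apply the translation formula \eqref{eq:theta}. By construction $D_{q_i}-D_0$ is the $2$-torsion class $v_i:=q_0+q_i\in W$; writing its coordinates as $(\lambda_i,\mu_i)$, the characteristic $[q_i]$ equals $[q_0]+v_i=\car{\mu_i}{\lambda_i}$ (up to the choice of representative). Specialising \eqref{eq:theta} to $[q]=[q_0]=\car{0}{0}$ and $v=v_i$ gives
$$\vartheta(z+v_i,\tau)=\ex\!\left(-\tfrac14\mu_i{}^t\lambda_i-\tfrac12\mu_i{}^tz-\tfrac18\mu_i\tau{}^t\mu_i\right)\cdot\vartheta[q_i](z,\tau).$$
Differentiating in $z_j$, evaluating at $z=0$, and using $\vartheta[q_i](0,\tau)=0$ (which holds because $q_i$ is odd), the derivative of the exponential prefactor does not contribute and one obtains
$$\vartheta_j(D_{q_i}-D_0,\tau)=c_i\cdot\frac{\partial\vartheta[q_i]}{\partial z_j}(0,\tau),\qquad c_i:=\ex\!\left(-\tfrac14\mu_i{}^t\lambda_i-\tfrac18\mu_i\tau{}^t\mu_i\right)\neq 0,$$
a constant depending only on $[q_i]$ and $\tau$ (hence on $\Omega$). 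The hypothesis $l(D_{q_i})=1$ is exactly what makes Proposition \ref{hyperplane} applicable (and the gradient nonzero), so $H_{D_{q_i}}$ is a genuine hyperplane.

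Next, Proposition \ref{hyperplane} says that the row vector $\big(\vartheta_1(D_{q_i}-D_0,\tau),\dots,\vartheta_g(D_{q_i}-D_0,\tau)\big)\,\Omega_1^{-1}$ is the coefficient vector of a linear form cutting out $H_{D_{q_i}}$. Since $\beta_{q_i}$ cuts out the same hyperplane, it is a nonzero scalar multiple of that form, so there is a nonzero $s_i$, depending on $\beta_{q_i}$, with the coefficient vector of $\beta_{q_i}$ equal to $s_i c_i\,G_i\,\Omega_1^{-1}$, where $G_i:=\big(\tfrac{\partial\vartheta[q_i]}{\partial z_1}(0,\tau),\dots,\tfrac{\partial\vartheta[q_i]}{\partial z_g}(0,\tau)\big)$. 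Stacking these $g$ rows into the matrix $R\,\Omega_1^{-1}$, with $R$ the matrix whose $i$th row is $s_i c_i G_i$, multilinearity of the determinant gives
$$[\beta_{q_1},\dots,\beta_{q_g}]=\Big(\prod_{i=1}^g s_i c_i\Big)\cdot\det\begin{pmatrix}G_1\\\vdots\\G_g\end{pmatrix}\cdot\det(\Omega_1)^{-1}.$$
The matrix with rows $G_1,\dots,G_g$ is the transpose of $\big(\tfrac{\partial\vartheta[q_j]}{\partial z_i}(0,\tau)\big)_{1\le i,j\le g}$, so its determinant equals $\pi^g[q_1,\dots,q_g](\tau)$. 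Taking $\eta_i:=s_i c_i$ for $2\le i\le g$ and $\eta_1:=s_1 c_1\,\pi^g\det(\Omega_1)^{-1}$ (any distribution of the global factor $\pi^g\det(\Omega_1)^{-1}$ will do) yields the claimed identity, with each $\eta_i$ depending only on $[q_i]$, $\beta_{q_i}$ and $\Omega$.

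The computation is essentially routine once the ingredients are assembled; the only step needing care is the first, where differentiating the translation formula \eqref{eq:theta} eliminates the exponential prefactor precisely because the $q_i$ are odd, turning the gradient $\big(\vartheta_1(D_{q_i}-D_0,\tau),\dots,\vartheta_g(D_{q_i}-D_0,\tau)\big)$ into a column of the Jacobian Nullwert matrix up to the explicit constant $c_i$. One must also keep track of the chosen representatives of the characteristics $[q_i]$ (via Lemma \ref{lem:signchange}), but any resulting sign is harmless since it is absorbed into $\eta_i$.
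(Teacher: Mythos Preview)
Your proof is correct and follows essentially the same route as the paper's: apply the translation formula \eqref{eq:theta} with $[q]=[q_0]$, differentiate at $z=0$ (using oddness of $q_i$ to kill the cross term), invoke Proposition \ref{hyperplane} to identify $\beta_{q_i}$ up to a scalar, and take the determinant. You are in fact slightly more explicit than the paper about why the exponential prefactor drops out and about tracking the factors $\pi^g$ and $\det(\Omega_1)^{-1}$, which the paper absorbs tacitly into the constants.
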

\begin{proof}
Let $v_i=D_{q_i}-D_0=[q_0]+[q_i]=(\lambda_i,\mu_i) \in W$ for $1 \leq
i \leq g$. By \eqref{eq:theta} one has
$$\vartheta(z +v_i,\tau) = \ex\left(-\frac{1}{4} \mu_i {^t \lambda_i} -
 \frac{1}{2} \mu_i {^t z} - \frac{1}{8} \mu_i \tau  {^t \mu_i}\right)
\cdot \vartheta \Ch{\mu_i}{\lambda_i}(z,\tau)$$
and  for all $1 \leq j \leq g$, we have
$$\vartheta_j(D_{q_i}-D_0,\tau) = \frac{\partial \vartheta(z)}{\partial z_j}(v_i,\tau) = \frac{\partial \vartheta(z+v_i)}{\partial z_j}(0,\tau) = c_i
\cdot \frac{\partial \vartheta[q_i](z,\tau)}{\partial z_j}(0,\tau)$$
where $c_i$ depends on $\tau$ and $[q_i]$.
Proposition \ref{hyperplane} shows that
$$\beta_{q_i} = c_i' \cdot
\left(\frac{\partial \vartheta[q_i](z,\tau)}{\partial z_1}(0,\tau), \cdots, \frac{\partial \vartheta[q_i](z,\tau)}{\partial z_g}(0,\tau)\right)
\Omega_1^{-1} \left(\begin{array}{c} X_1 \\ \vdots \\ X_{g}
  \end{array}\right)$$ 
for a constant $c_i'$ depending on $\beta_{q_i},[q_i]$ and $\tau$. Taking the
determinant, we get the result.
\end{proof}

\section{Proofs of Weber's formula} \label{sec:proof-weber}

We now restrict  to $g=3$ and we assume that $\calC$ is a non
hyperelliptic curve of genus $3$ over $\C$. Let
$(\omega_1,\omega_2,\omega_3)$ be a basis of regular differentials. The
canonical embedding $\phi : P \mapsto (\omega_1(P) : \omega_2(P) :
\omega_3(P)) \in \PP^2$  identifies $\calC$ with a smooth plane
quartic. Let $D$ be a theta characteristic divisor of $\calC$. If
$l(D)>0$, then $D \sim P+Q$, where $P,Q \in \calC$. But then $l(D)=1$,
otherwise, there would be a non constant function of degree $2$ with
pole $P+Q$ and $\calC$ would be hyperelliptic.
For the canonical
embedding, the line $H_D$ defined by $P,Q$ (resp. the tangent to $\calC$ if $P=Q$) is
tangent to $\calC$ at $P$ and $Q$ (resp. has intersection multiplicity $4$
at $P$). Such a line is called a
\emph{bitangent} to $\calC$. Using the bijection of Lemma \ref{rem:char},
we see that such a $D$ correspond to an odd quadratic form $q$. Hence
the number of bitangents in $28$. To describe this set, we introduce
an Aronhold set $S=\{q_1,\ldots,q_7\}$ associated to a given even form
$q_S=\sum_{i=1}^7 q_i$ (this is always possible by the transitive
action of $\Gamma$ on Aronhold sets). For all $1 \leq  i \ne j \leq
7$, we denote $q_{ij}=q_S+q_i+q_j$ the sum of $5$ distinct
$q_i$s, hence this is an odd form. The $28$ odd forms can all be
written as $q_i$ or $q_{ij}$ 
and we denote by $D_i=D_{q_i}$ or $D_{ij}=D_{q_{ij}}$ (resp. $\beta_i$,
$\beta_{ij}$) the theta characteristic divisor (resp. an arbitrary
fixed linear polynomial defining $H_{D_{q_i}}$ or $H_{D_{q_{ij}}}$)
associated to them. Note also that any even form different from $q_S$
can be written $q_{ijk}=q_i+q_j+q_k$ with $i,j,k$ distinct.
We can now state Weber's formula.

\begin{theorem}[{Weber's formula \cite[p.162]{weber}}] \label{eq:jacobi}
Let $q_S,q_T$ be two distinct even forms. Let $S=\{q_1,\ldots,q_7\}$ be
an Aronhold set such that $q_S=\sum_{i=1}^7 q_i$ and  assume that we have
ordered $S$ so that $q_1+q_2+q_3=q_T$. Define a Riemann matrix
$\tau \in \HH_3$ attached to $\Jac(\calC)$ following the beginning of
Section \ref{sub:link}. Then 
\begin{equation}
\left(\frac{\vartheta[q_S](\tau)}{\vartheta[q_T](\tau)}\right)^4=
(-1)^{a(q_0+q_S+q_T)} \cdot 
\frac{\left[\beta_1,\beta_2,\beta_3\right] \cdot
\left[\beta_1,\beta_{12},\beta_{13}\right]
 \cdot \left[\beta_{12},\beta_{2},\beta_{23}\right] \cdot
\left[\beta_{13},\beta_{23},\beta_{3}\right]}{\left[
\beta_{23},\beta_{13},\beta_{12}\right] \cdot
\left[\beta_{23},\beta_{3},\beta_{2}\right] \cdot
\left[\beta_{3},\beta_{13},\beta_{1}\right] \cdot
\left[\beta_{2},\beta_{1},\beta_{12}\right]}
\end{equation}
where $[\beta_i, \beta_{j},\beta_k]$ is the determinant of the
coefficients of $\beta_i,\beta_j$ and $\beta_k$. 
\end{theorem}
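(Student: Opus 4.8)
The plan is to apply the Riemann--Jacobi formula of Corollary~\ref{cor:jacobi} to each of the eight Jacobian Nullwerte hidden on the right-hand side, and then to arrange the resulting products of Thetanullwerte so that all but the wanted factors cancel. \emph{First, from determinants of bitangents to Jacobian Nullwerte:} by Corollary~\ref{link} every bracket of linear forms is a nonzero constant times the corresponding Jacobian Nullwert, for instance $[\beta_{12},\beta_2,\beta_{23}]=\eta_{12}\,\eta_2\,\eta_{23}\cdot[q_{12},q_2,q_{23}]$, the constants depending only on the bitangent (and on $\Omega$). Each of the six bitangents $\beta_1,\beta_2,\beta_3,\beta_{12},\beta_{13},\beta_{23}$ occurs exactly twice in the numerator and exactly twice in the denominator of Weber's formula, so all these constants cancel in the quotient. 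It therefore suffices to prove the identity with each $[\beta_a,\beta_b,\beta_c]$ replaced by the Jacobian Nullwert $[q_a,q_b,q_c]$, the signs arising from reordering the entries of these determinants being $\tau$-independent and getting absorbed into the final sign.

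\emph{Completing to fundamental systems.} For $(q_1,q_2,q_3)$, which satisfies $q_1+q_2+q_3=q_T$, Proposition~\ref{prop:connection} applied to the Aronhold basis $(q_1,\dots,q_7)$ gives, after simplifying each $q_i+v$ by means of $q_S=\sum q_i$, the fundamental system $(q_1,q_2,q_3,q_{567},q_{467},q_{457},q_{456},q_S)$. Applying the operation $P\mapsto v_i+P$ of Proposition~\ref{prop:shift} with $v_i=q_i+q_S$ for $i=1,2,3$ produces fundamental systems completing the remaining numerator triples $(q_1,q_{12},q_{13})$, $(q_{12},q_2,q_{23})$, $(q_{13},q_{23},q_3)$; in each case the five even forms are $q_S$ together with, respectively, $q_{234},q_{235},q_{236},q_{237}$, or $q_{134},q_{135},q_{136},q_{137}$, or $q_{124},q_{125},q_{126},q_{127}$. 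Since $q_{12}+q_{13}+q_{23}=q_S$, the ``conjugate'' Aronhold basis $S'=(q_{23},q_{13},q_{12},q_4,q_5,q_6,q_7)$ has sum $q_T$; running the very same construction on $S'$ --- equivalently, transporting the four numerator fundamental systems by the symplectic element $\gamma\in\Gamma$ carrying $(q_1,\dots,q_7)$ onto $S'$, which swaps $q_S$ with $q_T$ and fixes each of the sixteen even forms above --- yields fundamental systems completing the four denominator triples $(q_{12},q_{13},q_{23})$, $(q_{23},q_3,q_2)$, $(q_3,q_{13},q_1)$, $(q_2,q_1,q_{12})$, with the same sixteen even forms but with $q_S$ replaced by $q_T$.

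\emph{Cancellation and sign.} Corollary~\ref{cor:jacobi} now turns each of the eight Jacobian Nullwerte into $\pm$ the product of the five Thetanullwerte of its fundamental system. In the numerator $\vartheta[q_S](\tau)$ occurs four times and the sixteen other even Thetanullwerte once each; in the denominator those sixteen occur once each and $\vartheta[q_T](\tau)$ occurs four times. Forming the quotient, everything cancels except $\pm\,\bigl(\vartheta[q_S](\tau)/\vartheta[q_T](\tau)\bigr)^4$. The resulting sign is independent of $\tau$ (the individual Riemann--Jacobi signs being $\tau$-independent), and its evaluation as $(-1)^{a(q_0+q_S+q_T)}$ is the content of Section~\ref{sec:sign}, carried out by a low-precision numerical computation together with Igusa's transformation formula.

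\emph{Main obstacle.} The delicate point is the middle step: one must check that $S'$ is again an Aronhold basis (so that $\gamma$ exists), that every azygetic triple of odd forms completes \emph{uniquely} to a fundamental system --- so that Corollary~\ref{cor:jacobi} applies with a single product rather than a sum over $\calS$ --- and, above all, that the sixteen ``auxiliary'' even forms produced on the two sides genuinely coincide, so that they cancel in the quotient. Propositions~\ref{prop:connection} and~\ref{prop:shift} are designed precisely to make this tractable, reducing it to elementary $\FF_2$-arithmetic with the Aronhold basis; but keeping the eight fundamental systems under simultaneous control is where the real work lies. The sign is the only genuinely transcendental ingredient, and it is postponed.
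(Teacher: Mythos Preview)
Your proposal is correct and follows essentially the same route as the paper's proof in Section~\ref{sec:our}: use Corollary~\ref{link} to trade bitangent determinants for Jacobian Nullwerte (the $\eta$'s cancel since each bitangent appears twice up and twice down), build the eight fundamental systems via Proposition~\ref{prop:connection} and Proposition~\ref{prop:shift} applied to the Aronhold bases $S$ and $S'$ of Lemma~\ref{aronq}, apply Corollary~\ref{cor:jacobi} to each, and observe that the auxiliary even Thetanullwerte cancel, leaving $\pm(\vartheta[q_S]/\vartheta[q_T])^4$ with the sign settled in Section~\ref{sec:sign}. The only cosmetic difference is that the paper pairs each $P_i$ with its $P_i'$ (noting $v_i=v_i'$, so the four auxiliary even forms cancel pairwise in each quotient), whereas you cancel all sixteen at once across the full numerator/denominator; and you need not worry about uniqueness of the completion to a fundamental system, since you apply Corollary~\ref{cor:jacobi} to the explicit systems you have constructed rather than to an abstract azygetic triple.
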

Let us point out that each defining polynomial of a bitangent appears
as many times on the
numerator as on the denominator, so the quotient of the two
expression does not depend on the choice of a fixed polynomial. 
Similarly, as the characteristics $[q_S],[q_T]$ appear in
Thetanullwerte raised to an even power, one can choose any representative
for the characteristics associated to $q_S,q_T$. However, the dependence on the choices of
symplectic basis and regular differentials appear on the left in the
choice of $\tau$ and on the right side in the choice of $q_0$.

\subsection{Sketch of  Weber's proof} \label{sec:weber}
The original proof of Weber's formula can be found in his book
\cite{weber}. We want to give here an overview of his proof,
formulated in a simpler and modern form. For symmetry, we denote
$p_1=q_S$ and $p_2=q_T$ and then 
$$p_1+p_2=q_1+q_{23}=q_2+q_{13}=q_3+q_{12}.$$
 Let
$$D_{1} \sim A+B, \quad D_{23} \sim G+H$$
be the two theta characteristics divisors associated to $q_1$ and
$q_{23}$. The points $A,B$ (resp. $H,G$) are then the support of
the bitangents $\beta_1$, (resp. $\beta_{23}$). Let $S=S_1+S_2+S_3$ be an arbitrary generic
effective divisor of degree $3$ on $\calC$.  
We introduce
$$f_{i,S}(P) = \vartheta[p_i](P+S-\kappa)$$ with $\kappa = 2(A+B)$, so
this fixes a precise value for $f_{i,S}(P)$ in $\CC$ once paths have been chosen
to each point. The $f_{i,S}(P)$ are regular sections of line bundles
over $\calC$. According to Riemann theorem \cite[V.Th.1]{farkas}, if
$f_{i,S}$ is not identically zero then its zero divisor $(f_{i,S})_0$
has degree three and satisfies
$$(f_{i,S})_0 \sim D_0+ (p_i + q_0) + \kappa-S = D_{p_i}
+\kappa -S.$$
Since $l(\kappa + D_{p_i})=4$, we let $t_i,u_i,v_i,w_i$ be a basis
of sections (called \emph{Wurzelfunctionen} in Weber's book). We then define
\begin{equation} \label{det}
\chi_{i,S}(P) = \det \begin{pmatrix} t_i(P) & u_i(P) & v_i(P) &
  w_i(P) \\
t_i(S_1) & u_i(S_1) & v_i(S_1) &
  w_i(S_1) \\
t_i(S_2) & u_i(S_2) & v_i(S_2) &
  w_i(S_2) \\
t_i(S_3) & u_i(S_3) & v_i(S_3) &
  w_i(S_3) \end{pmatrix}.
\end{equation}
Since $\chi_{i,S}(S_j)=0$ for $1
\leq j \leq 3$, we see that $(\chi_{1,S})_0 = S + R_i$ where $R_i$
is an effective divisor
of degree $3$, uniquely defined by $R_i+S \sim \kappa + D_{p_i}$.
Now $$(f_{i,S})_0 \sim D_{p_i}
+\kappa -S \sim R_i$$
so actually $(f_{i,S})_0=R_i$. Therefore, $(f_{1,S})_0 - (f_{2,S})_0 =
R_1-R_2= (\chi_{1,S})_0 -
(\chi_{2,S})_0$ and there exists a constant $\alpha_S$ such that 
$$\frac{f_{1,S}(P)}{f_{2,S}(P)} = \alpha_S \cdot \frac{\chi_{1,S}(P)}{\chi_{2,S}(P)}.$$  
\begin{lemma}
$\alpha_S$ does not depend on $S$.
\end{lemma}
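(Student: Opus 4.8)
The plan is to show that the ratio $\alpha_S = f_{1,S}(P)/f_{2,S}(P) \cdot \chi_{2,S}(P)/\chi_{1,S}(P)$, which is \emph{a priori} a constant depending on $S$ (and on a choice of base point $P$, though it does not actually depend on $P$ since both sides are functions of $P$ with the same divisor), is in fact independent of $S$. First I would observe that for a fixed generic $P_0$, the quantity $\alpha_S$ can be evaluated as
$$\alpha_S = \frac{f_{1,S}(P_0)}{f_{2,S}(P_0)} \cdot \frac{\chi_{2,S}(P_0)}{\chi_{1,S}(P_0)},$$
so it suffices to track how each of the four factors depends on $S=S_1+S_2+S_3$, treating $S$ as varying in a small neighborhood (or, after analytic continuation, globally up to the monodromy/quasi-periodicity that one must check cancels). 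The natural strategy is to compute the logarithmic derivative of $\alpha_S$ with respect to each $S_j$ and show it vanishes, or equivalently to exhibit $\alpha_S$ as a ratio of sections of the \emph{same} line bundle in the variable $S_j$, hence constant by genus considerations.

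Second, I would expand $\chi_{i,S}(P_0)$ along its first row: it is a linear combination of the $3\times 3$ minors coming from the rows indexed by $S_1,S_2,S_3$, with coefficients $t_i(P_0), u_i(P_0), v_i(P_0), w_i(P_0)$. The key point is that the Wurzelfunctionen $t_i,u_i,v_i,w_i$ form a basis of $H^0(\kappa + D_{p_i})$, and $f_{i,S}$, viewed as a function of $P_0$ with the other data fixed, is itself (by Riemann's theorem, as recalled in the excerpt) a section of the line bundle associated to $\kappa + D_{p_i} - S$; multiplying by the fixed section of $\calO(S)$ coming from $S_1,S_2,S_3$ realizes $f_{i,S}(P_0)$ as a section of $\kappa + D_{p_i}$ in $P_0$ as well. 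Thus both $f_{i,S}(P_0)$ and $\chi_{i,S}(P_0)$, as functions of $S_j$ for fixed $i$ and fixed $P_0$, are sections of one and the same line bundle on $\calC$ (namely $\kappa + D_{p_i}$ pulled back appropriately in the $S_j$ variable, since the dependence on $S_j$ enters symmetrically to the dependence on $P_0$). Hence the ratio $f_{i,S}(P_0)/\chi_{i,S}(P_0)$ is independent of each $S_j$; applying this for $i=1$ and $i=2$ and dividing gives that $\alpha_S$ is independent of $S$.

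The cleanest way to phrase the line-bundle argument is to use the theta-function description directly: $f_{i,S}(P_0) = \vartheta[p_i](P_0 + S - \kappa)$ is, by the heat-kernel/quasi-periodicity properties of $\vartheta$, symmetric (up to the explicit exponential factors of Lemma with equation~\eqref{eq:theta}) in the points of $P_0 + S_1 + S_2 + S_3$; likewise $\chi_{i,S}(P_0)$ is, by construction as a determinant of sections, a holomorphic section in each of $P_0, S_1, S_2, S_3$ of the bundle $\calO(\kappa + D_{p_i})$, and it vanishes whenever two of these four points coincide or whenever the divisor $P_0 + S_1 + S_2 + S_3 - \kappa - D_{p_i}$ becomes special. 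Since $(f_{i,S}(P_0))$ and $(\chi_{i,S}(P_0))$ have the same divisor in the variable $S_1$ (both equal to $R_i + (\text{the part not involving } S_1)$ by the divisor computation already done in the sketch, now run with $P_0$ playing the role of a fixed point and $S_1$ varying), their ratio is a nonzero constant in $S_1$; repeating for $S_2,S_3$ finishes the argument.

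The main obstacle I expect is bookkeeping the quasi-periodicity factors: $\vartheta[p_i](P_0+S-\kappa)$ is only a section of a line bundle \emph{after} choosing paths, and as one moves $S_j$ around a loop it picks up an automorphy factor; one must verify that the \emph{same} automorphy factor is picked up by $\chi_{i,S}(P_0)$ (this is automatic if the Wurzelfunctionen are genuine sections, i.e. chosen with matching automorphy, which is how they are defined) and — crucially — that these factors are \emph{independent of $i$}, so that they cancel in the ratio $f_{1,S}/f_{2,S} \cdot \chi_{2,S}/\chi_{1,S}$. Since $D_{p_1}$ and $D_{p_2}$ differ by a $2$-torsion point, $\kappa + D_{p_1}$ and $\kappa + D_{p_2}$ are algebraically equivalent bundles differing by a point of $\Jac(\calC)[2]$, and the relevant automorphy factors in the $S_j$ variable depend only on this algebraic equivalence class — hence agree. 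Making this last cancellation precise, rather than the divisor-counting which is routine, is the real content, and it is exactly where the hypothesis that $p_1,p_2$ are both \emph{even} theta characteristics (so $f_{i,S}$ is not identically zero and the sections are genuinely nonzero) gets used.
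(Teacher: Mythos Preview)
Your approach can be made to work, but as written it contains a genuine error, and it is far more elaborate than what the paper does. The paper's proof is a three-line symmetry trick: evaluate the defining relation at $P=S_1'$ and observe that $f_{i,S}(S_1')=\vartheta[p_i](S_1'+S_1+S_2+S_3-\kappa)=f_{i,S_1'+S_2+S_3}(S_1)$ by symmetry of the argument, while $\chi_{i,S}(S_1')=-\chi_{i,S_1'+S_2+S_3}(S_1)$ by antisymmetry of the determinant. Substituting gives $\alpha_S=\alpha_{S_1'+S_2+S_3}$ immediately, with no line-bundle bookkeeping and no automorphy factors to track.

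The error in your argument is the claim that $f_{i,S}(P_0)/\chi_{i,S}(P_0)$ is constant in $S_1$ for each $i$ separately. It is not: as a function of $S_1$, $f_{i,S}(P_0)$ has a degree-$3$ zero divisor (namely $R_i$, where $R_i+P_0+S_2+S_3\sim\kappa+D_{p_i}$), whereas $\chi_{i,S}(P_0)$ has a degree-$6$ zero divisor $P_0+S_2+S_3+R_i$ (the determinant acquires extra zeros whenever $S_1$ collides with one of the other three points). So the individual ratio has genuine poles at $S_1\in\{P_0,S_2,S_3\}$. Your attempted fix, ``multiplying by the fixed section of $\calO(S)$'', works in the $P_0$ variable but not in the $S_1$ variable, since that section then itself depends on $S_1$.

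What \emph{is} true, and what rescues your strategy, is that these extra zeros $P_0+S_2+S_3$ are independent of $i$. Hence in the full four-term ratio $\alpha_S$ they cancel, and the remaining divisor $R_1-R_2$ from the $f$-part is exactly cancelled by $R_2-R_1$ from the $\chi$-part. Combined with your (correct) observation that the automorphy characters in $S_1$ depend only on $D_{p_i}\bmod$ algebraic equivalence and hence cancel between numerator and denominator, this shows $\alpha_S$ is a holomorphic function of $S_1$ on $\calC$, therefore constant. So the route is valid once you abandon the per-$i$ claim and argue directly with $\alpha_S$ --- but the paper's swap argument reaches the same conclusion with none of this machinery.
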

\begin{proof}
One has 
$$\frac{f_{1,S}(P)}{f_{2,S}(P)} \cdot  \frac{\chi_{2,S}(P)}{\chi_{1,S}(P)}= \alpha_S.$$
We have to prove that the expression on the left side does not depend
on the support of $S=S_1+S_2+S_3$. It is enough to show that
$\alpha_S=\alpha_{S_1'+S_2+S_3}$ for another generic point $S_1'$. Note that
$f_{i,S}(S_1') = \vartheta[p_i](S_1'+S_1+S_2+S_3-\kappa)
=f_{i,S_1'+S_2+S_3}(S_1)$
and  $\chi_{i,S}(S'_1) =- \chi_{i,S'_1+S_2+S_3}(S_1)$. Hence
$$\alpha_S=\frac{f_{1,S}(S_1')}{f_{2,S}(S_1')} \cdot  \frac{\chi_{2,S}(S_1')}{\chi_{1,S}(S_1')}=\frac{f_{1,S'_1+S_2+S_3}(S_1)}{f_{2,S_1'+S_2+S_3}(S_1)} \cdot  \frac{\chi_{2,S_1'+S_2+S_3}(S_1)}{\chi_{1,S_1'+S_2+S_3}(S_1)}=\alpha_{S_1'+S_2+S_3}.$$
\end{proof}
In the sequel we are going to use two particular divisors $S$.
\begin{lemma}
If $S = B+A+B$ then
$$\frac{f_{1,S}(A)^2}{f_{2,S}(A)^2} = \frac{\vartheta[p_1](0)^2}{\vartheta[p_2](0)^2}.$$
If moreover $S'=B+ G+H$ then 
$$\frac{f_{1,S'}(P)^2}{f_{2,S'}(P)^2} = (-1)^{a(q_0+p_1+p_2)}
\cdot \frac{f_{2,S}(P)^2}{f_{1,S}(P)^2}.$$
\end{lemma}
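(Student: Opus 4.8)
The plan is to prove the two identities by direct computation with the divisor $S-\kappa$ and the translation formula \eqref{eq:theta}, the squares in the statement being exactly what is needed to absorb all sign and root-of-unity ambiguities. For the first identity, note that $\kappa=2(A+B)$ and $S=B+A+B=A+2B$ give $S-\kappa=-A$ \emph{as a divisor}, so $f_{i,S}(P)=\vartheta[p_i](P-A)$; evaluating at $P=A$ along the trivial loop yields $f_{i,S}(A)=\vartheta[p_i](0)=\vartheta[p_i](\tau)$ (a Thetanullwert, since $p_1=q_S$ and $p_2=q_T$ are even), and the claim follows on squaring the quotient. Any other path from $A$ to $A$ changes both $f_{1,S}(A)$ and $f_{2,S}(A)$ by a common nonzero factor up to a sign, which disappears after squaring.

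For the second identity, I would first pin down the translation linking $S'$ to $S$. By Lemma~\ref{rem:char} and the relation $q_1+q_{23}=p_1+p_2$, the theta characteristic divisors satisfy $D_{23}-D_1=(q_0+q_{23})-(q_0+q_1)=q_1+q_{23}=p_1+p_2$ in $V=\Jac(\calC)[2]$, hence $G+H\sim A+B+(p_1+p_2)$ in $\Pic(\calC)$. Since $S'-\kappa=B+G+H-2A-2B=(G+H)-2A-B$, the point of $\CC^g$ attached to $S'-\kappa$ is the one attached to $-A$ translated by a representative $w\in W$ of the $2$-torsion class $p_1+p_2$; writing $[p_i]=\Ch{\eps_i}{\eps'_i}$ and $w=(\lambda,\mu)$ with $\lambda\equiv\eps'_1+\eps'_2$, $\mu\equiv\eps_1+\eps_2\pmod 2$, this gives $f_{i,S'}(P)=\vartheta[p_i]\big((P-A)+w\big)$ up to a factor that is either independent of $[p_i]$ or a fourth root of unity, hence harmless after squaring the quotient.

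Now I would apply \eqref{eq:theta} with $v=w$ and $z=P-A$, getting
$$f_{i,S'}(P)=\ex\left(-\frac14\mu\,{^t(\eps'_i+\lambda)}-\frac12\mu\,{^t z}-\frac18\mu\tau\,{^t\mu}\right)\cdot\vartheta\car{\eps_i+\mu}{\eps'_i+\lambda}(z,\tau).$$
Because $\eps_1+\mu\equiv\eps_2$, $\eps'_1+\lambda\equiv\eps'_2$ and $\eps_2+\mu\equiv\eps_1$, $\eps'_2+\lambda\equiv\eps'_1\pmod 2$, Lemma~\ref{lem:signchange} converts the two theta factors into $\pm\vartheta[p_2](z,\tau)=\pm f_{2,S}(P)$ and $\pm\vartheta[p_1](z,\tau)=\pm f_{1,S}(P)$ respectively. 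Forming $f_{1,S'}(P)/f_{2,S'}(P)$ the terms $-\frac12\mu\,{^t z}$ and $-\frac18\mu\tau\,{^t\mu}$ cancel, and squaring eliminates every sign, so that
$$\frac{f_{1,S'}(P)^2}{f_{2,S'}(P)^2}=\ex\left(-\frac12\mu\,{^t(\eps'_1-\eps'_2)}\right)\cdot\frac{f_{2,S}(P)^2}{f_{1,S}(P)^2}=(-1)^{\mu\cdot(\eps'_1+\eps'_2)}\cdot\frac{f_{2,S}(P)^2}{f_{1,S}(P)^2}.$$
Finally $\mu\equiv\eps_1+\eps_2\pmod 2$, and the quadratic form $q_0+(p_1+p_2)$ has characteristic $\Ch{\eps_1+\eps_2}{\eps'_1+\eps'_2}$, so $\mu\cdot(\eps'_1+\eps'_2)\equiv(\eps_1+\eps_2)\cdot(\eps'_1+\eps'_2)=a(q_0+p_1+p_2)$, which is the sign in the statement.

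The main obstacle I anticipate is the Abel--Jacobi bookkeeping: the sections $f_{i,S}$ and $f_{i,S'}$ are defined using independently chosen paths of integration, so identifying $f_{i,S'}(P)$ with $\vartheta[p_i]$ of a \emph{specific} point of $\CC^g$ requires tracking those choices with care. This is precisely the reason the lemma is stated for squares: any change of path multiplies the quantities at hand by factors that are either independent of the characteristic $[p_i]$, and hence cancel in the quotient, or fourth roots of unity, and hence become $1$ after squaring. Consequently one may freely work with whatever lifts are convenient, as above, and the computation goes through.
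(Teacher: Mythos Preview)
Your proof is correct and follows essentially the same route as the paper: for the first identity you observe that $A+S-\kappa=0$, and for the second you write $P+S'-\kappa=(P+S-\kappa)+\big((G+H)-(A+B)\big)$, recognize the shift as a representative $(\lambda,\mu)$ of the $2$-torsion class $p_1+p_2=q_1+q_{23}$, apply the translation formula \eqref{eq:theta} (which swaps $[p_1]\leftrightarrow[p_2]$ up to sign), and extract the sign $(-1)^{\mu\cdot\lambda}=(-1)^{a(q_0+p_1+p_2)}$. The only cosmetic differences are that the paper fixes $[p_1]=\Ch{\eps}{\eps'}$ and writes $[p_2]=\Ch{\eps+\mu}{\eps'+\lambda}$ directly (so no appeal to Lemma~\ref{lem:signchange} is needed), whereas you keep symmetric notation $[p_i]=\Ch{\eps_i}{\eps'_i}$ and invoke Lemma~\ref{lem:signchange} to reduce; and you spell out more carefully why the path ambiguities are harmless after squaring, which the paper leaves implicit.
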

\begin{proof}
The first equality is trivial. As for the second, let $[p_1] =
\Ch{\eps}{\eps'}$ and $$(G+H)-(A+B) \sim D_{23}-D_1 =[q_{23}]-[q_1]= (\lambda,\mu),$$  so that
$[p_2]=[p_1]+[q_{23}]-[q_1]=\Ch{\epsilon + \mu}{\epsilon' + \lambda}$ (the choices for
the lifts of the quadratic forms are irrelevant because we are going to take squares).
Then using \eqref{eq:theta}
\begin{eqnarray*}
f_{1,S'}(P)^2 &= & \vartheta[p_1](P + B+G+H-\kappa)^2 \\
&=& 
\vartheta[p_1](P + B +A +B -\kappa + (G+H)-(A+B))^2\\
& =& (-1)^{\mu \cdot (\eps'+\lambda)}
\cdot c_{\tau,\mu,z}  \cdot f_{2,S}(P)^2
\end{eqnarray*}
where $z =P + B +A +B -\kappa$, $c_{\tau,\mu,z}$ is a constant
depending on $\tau,\mu, z$ and $$f_{2,S'}(P)^2 = (-1)^{\mu \cdot \eps'}
\cdot c_{\tau,\mu,z}  \cdot f_{1,S}(P)^2.$$
Hence for the quotient we get
$$\frac{f_{1,S'}(P)^2}{f_{2,S'}(P)^2} = (-1)^{\mu \cdot \lambda}
\cdot \frac{f_{2,S}(P)^2}{f_{1,S}(P)^2}.$$
\end{proof}
From this we get that
$$\frac{f_{1,S}(A)^2 \cdot f_{2,S'}(A)^2}{f_{2,S}(A)^2 \cdot
  f_{1,S'}(A)^2} = (-1)^{a(q_0+p_1+p_2)} \cdot  \frac{\vartheta[p_1](0)^4}{\vartheta[p_2](0)^4}
= \frac{\chi_{1,S}(A)^2 \cdot
  \chi_{2,S'}(A)^2}{\chi_{2,S}(A)^2 \cdot \chi_{1,S'}(A)^2}.$$
Note, however, that the expression 
$\chi_{1,S}(A)/\chi_{2,S}(A)$ take the indeterminate form $0/0$ so we need first to resolve this
ambiguity and then we will express everything in terms of the bitangents.\\

We denote as Weber did
$\sqrt{\beta_i}$ (resp. $\sqrt{\beta_{ij}}$) a (fixed) section (\emph{Abelsche Function}) of the
bundle associate to $D_i$ (resp. to $D_{ij}$).
Since
$$p_1+q_1 = q_3+ q_{13} = q_2 + q_{12}, \quad p_1+q_{23} =
q_2+q_3=q_{13}+q_{12}$$
and
$$p_2+q_1 = q_2+ q_{3} = q_{13} + q_{12}, \quad p_2+q_{23} =
q_3+q_{13}=q_{2}+q_{12}$$ 
 We can then choose for
$t_i,u_i,v_i$ and $w_i$ the following expressions
$$t_1= \sqrt{\beta_1 \beta_3 \beta_{13}}, \; u_1= \sqrt{\beta_1
  \beta_2 \beta_{12}}, \; v_1 = \sqrt{\beta_{23} \beta_2 \beta_{3}}, \;
w_1 = \sqrt{\beta_{23} \beta_{13} \beta_{12}}$$
and
$$t_2= \sqrt{\beta_1 \beta_2 \beta_{3}}, \; u_2= \sqrt{\beta_1
  \beta_{13} \beta_{12}}, \; v_2 = \sqrt{\beta_{23} \beta_{3} \beta_{13}}, \;
w_2 = \sqrt{\beta_{23} \beta_{2} \beta_{12}}.$$
We start with a divisor $S=S_1+A+B$ and we will let $S_1=B$ and $P=A$ once we have resolved the ambiguity
$0/0$. Note that $\sqrt{\beta_1}(A)=\sqrt{\beta_1}(B)=0$. Hence the
determinant \eqref{det} becomes
$$\chi_{i,S}(P) = (t_i(P) u_i(S_1) - t_i(S_1) u_i(P)) \cdot
(v_i(A) w_i(B) - v_i(B) w_i(A)).$$ 
In the quotient
$\chi_{1,S}(P)/\chi_{2,S}(P)$ we see that  $\sqrt{\beta_1}(P)
\sqrt{\beta_1}(S_1)$ and $\sqrt{\beta_{23}}(A)
\sqrt{\beta_{23}}(B)$ appear in the numerator and in the denominator, so after cancellation and taking $S_1=B$ and $P=A$, we are left with
(writting $\sqrt{\beta_i^A}= \sqrt{\beta_i}(A)$ and $\sqrt{\beta_i^B}=\sqrt{\beta_i}(B)$) 

$$\frac{\chi_{1,S}(A)}{\chi_{2,S}(A)} = \frac{\left(\sqrt{\beta_3^{A}
    \beta_{13}^{A} \beta_2^{B} \beta_{12}^{B}} - \sqrt{\beta_3^{B}
    \beta_{13}^{B} \beta_2^{A} \beta_{12}^{A}}\right) \cdot \left(\sqrt{\beta_2^{A}
    \beta_{3}^{A} \beta_{13}^{B} \beta_{12}^{B}} - \sqrt{\beta_2^{B}
    \beta_{3}^{B} \beta_{13}^{A} \beta_{12}^{A}}\right)}{\left(\sqrt{\beta_2^{A}
    \beta_{3}^{A} \beta_{13}^{B} \beta_{12}^{B}} - \sqrt{\beta_2^{B}
    \beta_{3}^{B} \beta_{13}^{A} \beta_{12}^{A}}\right) \cdot \left(\sqrt{\beta_3^{A}
    \beta_{13}^{A} \beta_{2}^{B} \beta_{12}^{B}}) - \sqrt{\beta_3^{B}
    \beta_{13}^{B} \beta_{2}^{A} \beta_{12}^{A}}\right)}=1.$$

\begin{remark} \label{rem:generic}
Until this point, the proof could be easily generalized to  a curve of arbitrary genus $g
\geq 3$. Let us indicate the main modifications. One would consider an effective divisor $S=S_1+\ldots+S_{2g-3}$ of
degree $2g-3$ and the section
$$\chi_{i,S}(P) =\det \begin{pmatrix} t^{(1)}_i(P) & \cdots &
  t^{(2g-2)}_i(P) \\
t^{(1)}_i(S_1) & \cdots &
  t^{(2g-2)}_i(S_1) \\
\vdots & & \vdots \\
t^{(1)}_i(S_{2g-3}) & \cdots &
  t^{(2g-2)}_i(S_{2g-3}) \\
\end{pmatrix}, \quad 1 \leq i \leq 2$$
for the bundle associated to the
divisor $\kappa + D_{p_i}$.\\
The previous decompositions of $p_1+p_2$ as sum of two odd characteristics
are special cases of Steiner systems \cite{dolgacag,nart-un}. 
In general there are
$2^{g-2}(2^{g-1}-1)$ pairs $\{q_i,\bar{q_i}\}$ of odd characteristics
such that $p_1+p_2=q_i+\bar{q_i}$ (above we wrote only half of them).
 Among the characteristics $q_i,\bar{q_i}$ consider the ones which also
 appears in the pairs of the Steiner system relative to
 $p_1+q_1$. After ordering we can write $p_1+q_1 = p_2+\bar{q_1}$ in
 $g+1$ ways $q_i +q_j$ or $\bar{q_i}
 + \bar{q_j}$. One has similarly
$p_1 + \bar{q_1} =  p_2 + q_1$ in $g+1$ ways as $\bar{q_i}+ q_j$ or $q_i + \bar{q_j}$
for the same indices.
If we denote $(i)$ (resp. $(\bar{i})$) a section relative to the
bundle $D_{q_i}$ (resp. $D_{\bar{q_i}}$) we then choose to write for
the $g+1$ choices of $\{i,j\}$ above
$$t_1^{(k)} = (1) (i) (j) \; \textrm{or} \; (1) (\bar{i}) (\bar{j}),
\quad t_2^{(k)} = (\bar{1}) (i) (j) \; \textrm{or} \; (\bar{1}) (\bar{i}) (\bar{j}), \quad 1 \leq
k \leq g-1$$
and
$$t_1^{(k)} = (\bar{1}) (\bar{i}) (j) \; \textrm{or} \; (\bar{1}) (i)
(\bar{j}), \quad t_2^{(k)} = (1) (\bar{i}) (j)  \; \textrm{or} \; (1) (i) (\bar{j}), \quad g \leq
k \leq 2g-2.$$
 The
support of the theta-characteristic divisor $D_{q_1}$ is a
sum of $g-1$ points $A_1,\ldots,A_{g-1}$. Letting first
 $(S_{g-1},\ldots,S_{2g-3})=(A_1,\ldots,A_{g-1})$ gives the sections $\chi_{i,S}(P)$
 as products of determinants of size $g-1$ from which we can simplify
 the sections $(1)$ and $(\bar{1})$ in the quotient
 $\chi_{1,S}(P)/\chi_{2,S}(P)$. 
It is then enough to take $(P,S_1,\ldots, S_{g-2})= (A_1,A_2,\ldots, A_{g-1})$ to obtain the same expression
 for the numerator and denominator and conclude that the quotient is $1$.   
\end{remark}

We now deal with the divisor $S'=B + G+H$. We
now have  $\sqrt{\beta_{23}}(G)=\sqrt{\beta_{23}}(H)=0$; hence
$$\chi_{i,S'}(A) = -  (v_i(A) w_i(B) - v_i(B) w_i(A)) \cdot
(t_i(G) u_i(H) - t_i(H) u_i(G)).$$
Again we can simplify a bit the quotient (writing $\sqrt{\beta_i^G}=
\sqrt{\beta_i}(G)$ and $\sqrt{\beta_i^H}= \sqrt{\beta_i}(H)$)
$$\frac{\chi_{1,S'}(A)}{\chi_{2,S'}(A)} = \frac{\overbrace{\left(\sqrt{\beta_2^{A}
    \beta_{3}^{A} \beta_{13}^{B} \beta_{12}^{B}} - \sqrt{\beta_2^{B}
    \beta_{3}^{B} \beta_{13}^{A} \beta_{12}^{A}}\right)}^{M_1} \cdot \overbrace{\left(\sqrt{\beta_3^{G}
    \beta_{13}^{G} \beta_{2}^{H} \beta_{12}^{H}} - \sqrt{\beta_3^{H}
    \beta_{13}^{H} \beta_{2}^{G} \beta_{12}^{G}}\right)}^{M_2}}{\underbrace{\left(\sqrt{\beta_3^{A}
    \beta_{13}^{A} \beta_{2}^{B} \beta_{12}^{B}} - \sqrt{\beta_3^{B}
    \beta_{13}^{B} \beta_{2}^{A} \beta_{12}^{A}}\right)}_{N_1} \cdot \underbrace{\left(\sqrt{\beta_2^{G}
    \beta_{3}^{G} \beta_{13}^{H} \beta_{12}^{H}}) - \sqrt{\beta_2^{H}
    \beta_{3}^{H} \beta_{13}^{G} \beta_{12}^{G}}\right)}_{N_2}}.
$$
Using the fact that the space of regular sections of the bundle
associated to the divisor $\kappa+(p_1+p_2)$ has
dimension $2$, we see that there is a linear relation of the form
$$h_1 \sqrt{\beta_1 \beta_{23}} + h_2 \sqrt{\beta_2 \beta_{13}} + h_3
\sqrt{\beta_3 \beta_{12}}=0.$$
Changing the value of the $\sqrt{\beta_i}$, we can even assume that
$h_1=h_2=1$ and $h_3=-1$. Using the fact that
$\sqrt{\beta_1^A}=\sqrt{\beta_1^B}=\sqrt{\beta_{23}^G}=\sqrt{\beta_{23}^H}=0$, we get that 
\begin{equation} \label{model}
\sqrt{\beta_2^A \beta_{13}^A} = \sqrt{\beta_3^A \beta_{12}^A}, \quad
\sqrt{\beta_2^B \beta_{13}^B} = \sqrt{\beta_3^B \beta_{12}^B}
\end{equation}
and similarly for $G,H$.
We can now rewrite the $M_i,N_i$ in the following way
$$\sqrt{\beta_3^A \beta_3^B} \cdot M_1= \sqrt{\beta_2^A \beta_2^B}
  \cdot \left(\beta_3^A \beta_{13}^B - \beta_3^B \beta_{13}^A\right), \;
  \sqrt{\beta_3^A \beta_3^B} \cdot N_1 = \sqrt{\beta_{13}^A \beta_{13}^B}
  \cdot \left(\beta_{3}^A \beta_{2}^B - \beta_{3}^B \beta_{2}^A\right),$$

$$\sqrt{\beta_{3}^G \beta_{3}^H} \cdot M_2= \sqrt{\beta_{13}^G \beta_{13}^H}
  \cdot \left(\beta_{3}^G \beta_{2}^H - \beta_{3}^H \beta_{2}^G\right), \;
  \sqrt{\beta_{3}^G \beta_{3}^H} \cdot N_2 = \sqrt{\beta_{2}^G \beta_{2}^H}
  \cdot \left(\beta_{3}^G \beta_{13}^H - \beta_{3}^H \beta_{13}^G\right).$$

Now, we write $\beta_3$ as a linear combinaison of  $\beta_{13},\beta_2,\beta_1$
(resp.  $\beta_{13},\beta_2,\beta_{23}$)
\begin{equation} \label{sys1}
\beta_3 = a_1 \beta_{13} + b_1 \beta_{2} + c_1 \beta_1 = a_2
\beta_{13} + b_2 \beta_{2} + c_2 \beta_{23}. 
\end{equation}
Using the first equality we get
$$\begin{cases}
\beta_3^A &= a_1 \beta_{13}^A + b_1 \beta_{2}^A,  \\
\beta_3^B &= a_1 \beta_{13}^B + b_1 \beta_{2}^B.  
\end{cases}$$
Hence using Cramer's rule we get
$$\frac{M_1}{N_1} = \frac{\sqrt{\beta_2^A
    \beta_2^B}}{\sqrt{\beta_{13}^A \beta_{13}^B}} \cdot
\frac{b_1}{a_1} \;\textrm{and similarly} \; \frac{M_2}{N_2} = \frac{\sqrt{\beta_{13}^G
    \beta_{13}^H}}{\sqrt{\beta_{2}^G \beta_{2}^H}} \cdot
\frac{a_2}{b_2}.$$
It remains to deal with the quotient $\sqrt{\beta_2^A
  \beta_2^B}/\sqrt{\beta_{13}^A\beta_{13}^B}$ (and similarly with $\sqrt{\beta_{13}^G
    \beta_{13}^H}/\sqrt{\beta_{2}^G \beta_{2}^H}$). In order to do so, we
introduce two other linear combinaisons
\begin{equation} \label{sys2}
\beta_{12} = a'_1 \beta_{13} + b'_1 \beta_{2} + c'_1 \beta_1 = a'_2
\beta_{13} + b'_2 \beta_{2} + c'_2 \beta_{23}. 
\end{equation}
Because $\beta_{12}^A \beta_3^A = \beta_{13}^A \beta_2^A$ by \eqref{model},
we can rewrite this equality using \eqref{sys1}
$$ \beta_{13}^A \beta_2^A = \beta_{12}^A \beta_{3}^A =    (a_1' \beta_{13}^A
+ a_2' \beta_2^A) \cdot (a_1 \beta_{13}^A
+ a_2 \beta_2^A). $$ 
Hence $$\frac{\beta_2^A}{\beta_{13}^A} = (a_1 + b_1
\frac{\beta_2^A}{\beta_{13}^A}) \cdot (a_1' + b_1'
\frac{\beta_2^A}{\beta_{13}^A})$$ 
and we get the same expression replacing $A$ by $B$. Therefore, the
quotients $\frac{\beta_2^A}{\beta_{13}^A}$ and
$\frac{\beta_2^B}{\beta_{13}^B}$ can be seen as the two solutions of a
quadratic equation and  their product is equal to the
constant term divided by the leading coefficients; hence
$$\frac{\beta_2^A \beta_2^B}{\beta_{13}^A \beta_{13}^B} = \frac{a_1
  a_1'}{b_1 b_1'}$$
and similarly
$$\frac{\beta_{13}^G \beta_{13}^H}{\beta_{2}^G \beta_{2}^H} =
\frac{b_2 b_2'}{a_2 a_2'}.$$
Putting everything together, we get
\begin{eqnarray*}
\frac{\vartheta[p_1](0)^4}{\vartheta[p_2](0)^4} &=&
(-1)^{a(q_0+p_1+p_2)} \cdot \frac{N_1^2 N_2^2}{M_1^2 M_2^2} = 
(-1)^{a(q_0+p_1+p_2)} \cdot \frac{N_1^2 N_2^2}{M_1^2 M_2^2} \\
&=& (-1)^{a(q_0+p_1+p_2)} \cdot  \frac{b_1 b_1' a_2 a_2'}{a_1 a_1' b_2
  b_2'} \cdot \frac{a_1^2 b_2^2}{b_1^2 a_2^2}\\
&=&  (-1)^{a(q_0+p_1+p_2)} \cdot  \frac{a_1 b_2 b_1' a_2'}{b_1 a_2
  a_1' b_2'}.
\end{eqnarray*}
To get the final expression in Weber's formula, we now look for instance at the
linear system \eqref{sys1}. Using again Cramer's rule, one finds for
instance 
$$\frac{a_1}{b_1} =
\frac{[\beta_3,\beta_2,\beta_1]}{[\beta_{13},\beta_3,\beta_1]}, \; 
\frac{b_2}{a_2} =
\frac{[\beta_{13},\beta_3,\beta_{23}]}{[\beta_{3},\beta_2,\beta_{23}]}$$
and looking at \eqref{sys2}
$$\frac{b'_1}{a'_1} =
\frac{[\beta_{13},\beta_{12},\beta_1]}{[\beta_{12},\beta_2,\beta_1]}, \; 
\frac{a'_2}{b'_2} =
\frac{[\beta_{12},\beta_2,\beta_{23}]}{[\beta_{13},\beta_{12},\beta_{23}]}.$$
Changing the order of the columns, one gets the result.

\begin{remark} \label{rem:algo}
The complexity of the manipulations in this second part makes it
difficult to work out a  generalization of Weber's formula for
arbitrary genus. However, Remark
\ref{rem:generic} indicates that one should be able to  design  an
algorithm to compute the quotients of two Thetanullwerte in terms of
the 
equations of the hyperplanes supporting the odd theta characteristics
divisors. Indeed, if we denote $B_1,\ldots,B_{g-1}$ the support of
$D_{\bar{q_1}}$ and let $S'=A_2+\ldots+A_{g-1}+B_1+\ldots+B_{g-1}$,
then with the choice of sections of Remark \ref{rem:generic} we get that
$$  \frac{\vartheta[p_1](0)^4}{\vartheta[p_2](0)^4}
= (-1)^{a(q_0+p_1+p_2)} \cdot \frac{
  \chi_{2,S'}(A_1)^2}{\chi_{1,S'}(A_1)^2}.$$
This should be compared to a similar algorithm  suggested in
\cite{shepherdbarron-2008}. As far as we know, this latter version has
never been implemented.
\end{remark}

\subsection{A new proof} \label{sec:our}
In order to prove Weber's formula, we need  an extra
combinatoric result which proof can be easily obtained using the
results in Section \ref{sec:general}.
\begin{lemma} \label{aronq}
Let $q_S,q_T$ be two distinct even forms. Let $(q_1,\ldots,q_7)$ be an
Aronhold basis attached to $q_S$ ordered  such that $q_1+q_2+q_3=q_T$. Then
$$S'=(q_1',\ldots,q_7')=(q_{23},q_{13},q_{12},q_4,q_5,q_6,q_7)$$ is an
Aronhold basis attached to $q_T$ such that $q_1'+q_2'+q_3'=q_S$.
\end{lemma}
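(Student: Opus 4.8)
The plan is to verify directly that $S'$ satisfies the two conditions defining an Aronhold set, together with the two ``attachment'' identities; everything reduces to elementary $\FF_2$-arithmetic plus one short computation modulo $4$. Fix the base point $q_0$ of Section~\ref{rem:iden} and write $q_i=q_0+v_i$ with $v_i\in V$; set $s=v_4+v_5+v_6+v_7$ and $V_S=\sum_{i=1}^7 v_i$, so $q_S=q_0+V_S$. Since $q_{jk}=q_S+q_j+q_k$ is the sum of the five forms of $\{q_1,\dots,q_7\}$ other than $q_j$ and $q_k$, one reads off at once that $q_i'=q_i+s$ (equivalently $v_i'=v_i+s$) for $i\in\{1,2,3\}$, while $q_i'=q_i$ for $i\in\{4,5,6,7\}$; in particular $q_i=q_i'+q_4'+q_5'+q_6'+q_7'$ for $i\le 3$.

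First I would clear the two identities by computing in $V$ over $\FF_2$: $\sum_{i=1}^3 v_i'=(v_1+v_2+v_3)+3s=V_S$ gives $q_1'+q_2'+q_3'=q_0+V_S=q_S$, and $\sum_{i=1}^7 v_i'=(v_1+v_2+v_3)+3s+(v_4+v_5+v_6+v_7)=(v_1+v_2+v_3)+4s=v_1+v_2+v_3$ gives $\sum_{i=1}^7 q_i'=q_0+v_1+v_2+v_3=q_1+q_2+q_3=q_T$. Next I would establish the ``basis'' condition: every quadratic form is an odd-length $\{0,1\}$-sum of $q_1,\dots,q_7$ since $S$ is Aronhold, and each $q_j$ is itself an odd-length sum of the $q_i'$ (one term if $j\ge 4$, five terms if $j\le 3$), so substituting and reducing the coefficients modulo $2$ exhibits every quadratic form as an odd-length sum of the $q_i'$ (the length stays odd, being a sum of an odd number of odd lengths). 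As the number of odd subsets of $\{1,\dots,7\}$ equals $2^6=\#QV$, such a representation is unique, so $\#q$ is well defined for $S'$.

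The substance of the proof is the Arf-invariant condition. Given an odd subset $I\subseteq\{1,\dots,7\}$, write $I_0=I\cap\{1,2,3\}$ and $I_1=I\cap\{4,5,6,7\}$; using $\sum_{i\in I}v_i'=\sum_{i\in I}v_i+|I_0|\,s$ I would express $\sum_{i\in I}q_i'$ as $\sum_{j\in J}q_j$ for a suitable odd $J$. If $|I_0|$ is even the extra term disappears and $J=I$, so $|J|=|I|$; if $|I_0|$ is odd the extra term is $s$, which merges $\sum_{i\in I_1}v_i$ into $\sum_{i\in\{4,5,6,7\}\setminus I_1}v_i$, so $J=I_0\cup(\{4,5,6,7\}\setminus I_1)$ and $|J|=|I_0|+4-|I_1|$; there $|I|$ odd and $|I_0|$ odd force $|I_1|$ even, whence $|J|-|I|=4-2|I_1|\equiv 0\pmod 4$. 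So $|J|\equiv|I|\pmod 4$ in every case. Since $S$ is an Aronhold set and $g=3\equiv 3\pmod 4$, this yields $a\!\left(\sum_{i\in I}q_i'\right)=a\!\left(\sum_{j\in J}q_j\right)=\tfrac{|J|-1}{2}+1\equiv\tfrac{|I|-1}{2}+1\pmod 2$, which is exactly the Aronhold relation for $S'$ with $\#q=|I|$. Hence $S'$ is an Aronhold set, thus (being ordered) an Aronhold basis, and by the identities above it is attached to $q_T$ with $q_1'+q_2'+q_3'=q_S$.

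The one delicate step I anticipate is this final bookkeeping modulo $4$: one must observe that the two parity constraints (``$|I|$ odd'' and, in the only nontrivial case, ``$|I_0|$ odd'') together force $|I_1|$ to be even, which is precisely what makes the weight change $4-2|I_1|$ a multiple of $4$; everything else is routine. One could alternatively identify the passage $S\mapsto S'$ with a composition of the operations on fundamental systems of Propositions~\ref{prop:connection} and~\ref{prop:shift} via the bijection between Aronhold and azygetic bases, but the direct verification sketched here is shorter and self-contained.
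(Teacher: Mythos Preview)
Your proof is correct and is precisely the kind of direct verification the paper has in mind: the paper omits the proof entirely, simply stating that it ``can be easily obtained using the results in Section~\ref{sec:general}''. Your mod-$4$ bookkeeping on $|J|-|I|$ is the only nontrivial step, and you handle it cleanly.
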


By the relation between Aronhold basis and fundamental
systems given in Proposition \ref{prop:connection} and applying Lemma \ref{aronq}, we get
\begin{lemma} \label{funda}
Let  $S=(q_1,\ldots,q_7)$ be an Aronhold basis attached to an even
characteristic $q_S$ and $q_1+q_2+q_3=q_T$. Then
$$P_0=(p_i)_{i=1,\ldots,8}=(q_1,q_2,q_3,q_{567},q_{467},q_{457},q_{456},q_S)$$
and 
$$P'_0=(p_i')_{i=1,\ldots,8}= (q_{23},q_{13},q_{12},q_{567},q_{467},q_{457},q_{456},q_T)$$
are fundamental systems.
\end{lemma}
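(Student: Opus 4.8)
The plan is to read off both fundamental systems directly from Proposition~\ref{prop:connection}, applied to the two Aronhold bases supplied by Lemma~\ref{aronq}; essentially all the work has already been done, and what remains is bookkeeping.

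For $P_0$: since $g=3\equiv 3\pmod 4$, Proposition~\ref{prop:connection} applies verbatim to the Aronhold basis $S=(q_1,\dots,q_7)$ attached to $q_S$. In the notation of that proposition, $v=\sum_{i=4}^{7}q_i$, and it asserts that $(q_1,q_2,q_3,q_4+v,q_5+v,q_6+v,q_7+v,q_S)$ is a fundamental system. It then suffices to identify the four middle entries: $q_4+v=q_5+q_6+q_7=q_{567}$, and likewise $q_5+v=q_{467}$, $q_6+v=q_{457}$, $q_7+v=q_{456}$, so that this fundamental system is exactly
$$(q_1,q_2,q_3,q_{567},q_{467},q_{457},q_{456},q_S)=P_0.$$

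For $P_0'$: by Lemma~\ref{aronq} the reordered family $S'=(q_{23},q_{13},q_{12},q_4,q_5,q_6,q_7)$ is again an Aronhold basis, this time attached to the even form $q_T$ (i.e.\ $q_T$ is the sum of its seven members). Its last four members are the unchanged forms $q_4,\dots,q_7$, so the auxiliary vector appearing in Proposition~\ref{prop:connection} applied to $S'$ is the same $v$ as above. Hence that proposition gives that $(q_{23},q_{13},q_{12},q_4+v,q_5+v,q_6+v,q_7+v,q_T)$ is a fundamental system, and the same identifications $q_i+v=q_{jkl}$ with $\{j,k,l\}=\{4,5,6,7\}\setminus\{i\}$ turn this into
$$(q_{23},q_{13},q_{12},q_{567},q_{467},q_{457},q_{456},q_T)=P_0'.$$

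The only points demanding a moment of care — and they are not genuine obstacles — are verifying that the last four entries of the Aronhold basis produced by Lemma~\ref{aronq} are indeed left untouched (so the two invocations of Proposition~\ref{prop:connection} use literally the same vector $v$), and checking the torsor identity $q_i+v=q_{jkl}$ once. Both are immediate from the conventions of Section~\ref{sec:general}.
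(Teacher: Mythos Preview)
Your proof is correct and follows exactly the paper's approach: the paper simply states that Lemma~\ref{funda} is obtained ``by the relation between Aronhold basis and fundamental systems given in Proposition~\ref{prop:connection} and applying Lemma~\ref{aronq}'', which is precisely what you spell out. Your explicit identification $q_i+v=q_{jkl}$ with $\{i,j,k,l\}=\{4,5,6,7\}$ and your observation that the last four entries of $S'$ coincide with those of $S$ (so the same $v$ works) make the paper's one-line justification fully transparent.
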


Using Corollary \ref{cor:jacobi}  for the fundamental systems $P_0$ and $P'_0$
\begin{equation*} 
\frac{\left[p_1,p_2,p_3\right](\tau)}{\left[p_1^{'},p_2^{'},p_3^{'}\right](\tau)}
=
\frac{\left[q_1,q_2,q_3\right](\tau)}{\left[q_{23},q_{13},q_{12}\right](\tau)}
=\pm \prod_{i=4}^8
\frac{\vartheta\left[p_i\right](\tau)}{\vartheta\left[p_i^{'}\right](\tau)}
= \pm \frac{\vartheta[q_S](\tau)}{\vartheta[q_T](\tau)}.
\end{equation*}
Then Corollary \ref{link} shows that there exists constants
$\eta_i,\eta_{ij}$ (depending on $\beta_i,[q_i]$ or $\beta_{ij},[q_{ij}]$) 
such that
\begin{equation} \label{eq:S}
\frac{ [\beta_{1}, \beta_{2},\beta_{3}]}{[ \beta_{23},   \beta_{13}, \beta_{12}]} = \pm
\frac{ \eta_1 \eta_2 \eta_3 }{\eta_{23} \eta_{13} \eta_{12}} \cdot \frac{\vartheta[q_S](\tau)}{\vartheta[q_T](\tau)}.
\end{equation}
In order to kill the constants $\eta_i$, $\eta_{ij}$, we need to make
each $\beta_i,\beta_{ij}$ 
appears as many times in the numerator  as in the denominator. In
order to do this we
use Proposition \ref{prop:shift} to create new fundamental systems. To
simplify the notation and by analogy with the $q_{ij}$ let us denote $p_{ij} = p_{8}+ p_i +
p_j$ (for $1\leq i <j \leq 3$ we have $p_{ij}=q_{ij}$). For $1 \leq i
\leq 3$, let $v_i=p_8+p_i$, $v'_i= p'_8+p_i'$, $P_i=v_i+P_0$ and
$P_i'=v_i'+P_0'$. Since $v_i=v_i'$, we get the following explicit forms.
\begin{eqnarray*}
P_0 &=& (p_1,p_2,p_3,p_4,p_5,p_6,p_7,q_S), \\
P'_0&=& (p_{23},p_{13},p_{12},p_4,p_5,p_6,p_7,q_T), \\
P_1 &=& (p_1,p_{12},p_{13},p_{14},p_{15},p_{16},p_{17},q_S), \\
P_1' &=& (p_{23},p_{3},p_{2},p_{14},p_{15},p_{16},p_{17},q_T),\\
P_2 &=& (p_{12},p_{2},p_{23},p_{24},p_{25},p_{26},p_{27},q_S), \\
P'_2 &=& (p_{3},p_{13},p_{1},p_{24},p_{25},p_{26},p_{27},q_T), \\
P_3 &=& (p_{13},p_{23},p_{3},p_{34},p_{35},p_{36},p_{37},q_S), \\
P'_3 &=& (p_{2},p_{1},p_{12},p_{34},p_{35},p_{36},p_{37},q_T). 
\end{eqnarray*}
Hence 
\begin{eqnarray}
\frac{\left[\beta_1,\beta_{12},\beta_{13}\right](\tau)}{\left[\beta_{23},\beta_3,\beta_2\right](\tau)}
&=& \pm \frac{ \eta_1 \eta_{12} \eta_{13} }{\eta_{23} \eta_{3}
  \eta_{2}} \cdot
    \frac{\vartheta[q_S](\tau)}{\vartheta[q_T](\tau)},  \label{eq:S1} \\
\frac{\left[\beta_{12},\beta_2,\beta_{23}\right](\tau)}{\left[\beta_3,\beta_{13},\beta_1\right](\tau)}
&=& \pm \frac{ \eta_{12} \eta_2 \eta_{23} }{\eta_{3} \eta_{13}
  \eta_{1}} \cdot
    \frac{\vartheta[q_S](\tau)}{\vartheta[q_T](\tau)}, \label{eq:S2} \\ 
\frac{\left[\beta_{13},\beta_{23},\beta_3\right](\tau)}{\left[\beta_2,\beta_1,\beta_{12}\right](\tau)}
&=& \pm \frac{ \eta_{13} \eta_{23} \eta_3 }{\eta_{2} \eta_{1}
  \eta_{12}} \cdot \frac{\vartheta[q_S](\tau)}{\vartheta[q_T](\tau)}. \label{eq:S3}
\end{eqnarray}
Multiplying \eqref{eq:S},\eqref{eq:S1},\eqref{eq:S2} and \eqref{eq:S3} gives
 Weber's formula up to a sign which does not depend on $\tau$.

\subsection{The question of the sign} \label{sec:sign}
Following the different steps of the proof, we see that the sign in
Weber's formula only depends on the fundamental system $P_0$ and we
will denote it $\iota(P_0)$.
Let us denote also for a list of characteristics
$[P]=([p_1],\ldots,[p_8])$ such that
  $P=(p_1,\ldots,p_8)$ is a fundamental system and $\tau \in \HH_3$ 
 $$\calS([P],\tau) = \frac{[ p_1,  p_2,  p_3](
\tau)}{\prod_{i=4}^8 \theta[ p_i](
\tau)} = \pm 1.$$ 
When starting with a fundamental system $P$, we let $[P]$ be the
associated list of characteristics with coefficients $0$ and $1$.

\begin{lemma}[{\cite[p.420]{igusajacobi}}]
The following list $N_0=(n_1,\ldots,n_8)$ is a fundamental system (of
quadratic forms)
$$\begin{bmatrix} 1 & 0 & 0 \\ 1 & 0 & 0 \end{bmatrix}, \;
\begin{bmatrix} 0 & 1 & 0 \\ 1 & 1 & 0 \end{bmatrix}, \;
\begin{bmatrix} 0 & 0 & 1 \\ 1 & 1 & 1 \end{bmatrix}, \;
\begin{bmatrix} 1 & 0 & 0 \\ 0 & 0 & 0 \end{bmatrix}, \;
\begin{bmatrix} 0 & 1 & 0 \\ 1 & 0 & 0 \end{bmatrix}, \;
\begin{bmatrix} 0 & 0 & 1 \\ 1 & 1 & 0 \end{bmatrix}, \;
\begin{bmatrix} 0 & 0 & 0 \\ 1 & 1 & 1 \end{bmatrix}, \;
\begin{bmatrix} 0 & 0 & 0 \\ 0 & 0 & 0 \end{bmatrix}$$
\end{lemma}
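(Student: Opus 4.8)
The statement is a purely combinatorial finite check: I would verify directly the two defining properties of a fundamental system for the explicit list $N_0=(n_1,\dots,n_8)$. First, the parities: using $a\!\left(\Ch{\varepsilon}{\varepsilon'}\right)=\varepsilon\cdot\varepsilon'$ (Section \ref{rem:iden}), one reads off from the table that $a(n_1)=a(n_2)=a(n_3)=1$ and $a(n_4)=\dots=a(n_8)=0$, so the three odd forms sit in positions $1,2,3$ and the five even forms in positions $4,\dots,8$, exactly as the definition of a fundamental system requires.

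Next I would establish the azygetic property. The key remark is that $n_8=\Ch{0}{0}=q_0$, so for $1\le k\le 7$ the class $n_8+n_k\in V$ is precisely the vector $v_k$ determined by $n_k=q_0+v_k$; concretely, if $n_k=\Ch{\varepsilon_k}{\varepsilon'_k}$ then $v_k=(\varepsilon'_k,\varepsilon_k)$ (the coordinate swap being the convention of Section \ref{rem:iden}). Since the azygetic property of a sequence of quadratic forms is preserved under reordering, $(n_1,\dots,n_8)$ is azygetic if and only if $(n_8,n_1,\dots,n_7)$ is, and by definition the latter holds exactly when the seven vectors $v_1,\dots,v_7$ satisfy $\langle v_i,v_j\rangle=1$ for all $1\le i<j\le 7$. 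Thus the whole azygeticity reduces to $\binom{7}{2}=21$ symplectic products in $\FF_2^6$, each computed from $\langle(\lambda,\mu),(\lambda',\mu')\rangle=\lambda\cdot\mu'+\mu\cdot\lambda'$; every one is a one-line evaluation and all of them come out equal to $1$. Combined with the parity count, this proves $N_0$ is a fundamental system.

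I do not anticipate a genuine obstacle: the lemma is a parity check on eight forms plus $21$ bilinear evaluations. The only point demanding care is the bookkeeping of conventions — the coordinate swap in $v_k=(\varepsilon'_k,\varepsilon_k)$ and the exact sign-free shape of the pairing — since a slip there would corrupt every subsequent computation. Should one prefer not to privilege $n_8$, one may instead invoke Lemma \ref{lem:comp} in the form $\langle n_i+n_j,\,n_i+n_k\rangle=a(n_i+n_j+n_k)+a(n_i)+a(n_j)+a(n_k)$ and check for each of the $\binom{8}{3}=56$ triples that $a(n_i+n_j+n_k)\ne a(n_i)+a(n_j)+a(n_k)$, but reducing to the seven vectors $v_1,\dots,v_7$ is the shorter route.
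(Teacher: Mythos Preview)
Your proposal is correct. The paper does not actually supply a proof of this lemma: it is stated with a bare citation to \cite[p.420]{igusajacobi}, so there is no argument in the paper to compare against. Your direct verification --- checking the eight Arf invariants and then the $21$ symplectic pairings of the vectors $v_k=n_8+n_k$ --- is exactly the kind of finite check the statement calls for, and your observation that $n_8=q_0$ makes the reduction to the $v_k$ particularly clean.
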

For $1 \leq i \leq 3$, we can derive from $N_0$ the $N_i$, $N_0'$ and
the $N_i'$ as in Section \ref{sec:our}. For instance, we have for 
$N_0'=(n_1',\ldots,n_8')$ the following \emph{quadratic forms} 
$$\begin{bmatrix} 0 & 1 & 1 \\ 0 & 0 & 1 \end{bmatrix}, \;
 \begin{bmatrix} 1 & 0 & 1 \\ 0 & 1 & 1 \end{bmatrix}, \;
 \begin{bmatrix} 1 & 1 & 0 \\ 0 & 1 & 0 \end{bmatrix}, \;
 \begin{bmatrix} 1 & 0 & 0 \\ 0 & 0 & 0 \end{bmatrix}, \;  
\begin{bmatrix} 0 & 1 & 0 \\ 1 & 0 & 0 \end{bmatrix}, \;
 \begin{bmatrix} 0 & 0 & 1 \\ 1 & 1 & 0 \end{bmatrix}, \;%
 \begin{bmatrix} 0 & 0 & 0 \\ 1 & 1 & 1 \end{bmatrix}, \;
 \begin{bmatrix} 1 & 1 & 1 \\ 1 & 0 & 1 \end{bmatrix}.
$$

Using a compute algebra system like Magma\footnote{see \url{http://perso.univ-rennes1.fr/christophe.ritzenthaler/programme/theta-proof.magma}} \cite{magma}, we see that
\begin{lemma} \label{lem:n0}
$$\iota(N_0)=\prod_{i=0}^3 \frac{\calS([N_i],\tau)}{\calS([N_i'],\tau)} = 1.$$
\end{lemma}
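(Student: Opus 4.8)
The plan is to reduce the claimed identity $\iota(N_0)=\prod_{i=0}^3 \calS([N_i],\tau)/\calS([N_i'],\tau)=1$ to a finite, machine-checkable computation, and to justify why that computation certifies the sign for \emph{all} $\tau$ and hence for an arbitrary non-hyperelliptic genus $3$ curve. First I would recall that by Corollary~\ref{cor:jacobi} each quantity $\calS([P],\tau)$ is a sign independent of $\tau$, so the whole product $\prod_{i=0}^3 \calS([N_i],\tau)/\calS([N_i'],\tau)$ is a fixed element of $\{\pm1\}$; moreover, tracing through the derivation in Section~\ref{sec:our} (equations \eqref{eq:S}, \eqref{eq:S1}, \eqref{eq:S2}, \eqref{eq:S3}, whose product yields Weber's formula up to sign), the sign $\iota(P_0)$ attached to a fundamental system $P_0$ is \emph{precisely} the product $\prod_{i=0}^3 \calS([P_i],\tau)/\calS([P_i'],\tau)$ where $P_i,P_i'$ are built from $P_0$ by the shift operation of Proposition~\ref{prop:shift} and Lemma~\ref{funda}. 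So the content of the lemma is: for the specific fundamental system $N_0$ of Igusa, this product equals $+1$.

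Next I would make the computation honest. The point is that $\calS([P],\tau)$ is genuinely $\tau$-independent, so it suffices to evaluate it at one convenient $\tau$; following the remark after Corollary~\ref{cor:jacobi}, I would take $\tau = \mathrm{diag}(\tau_1,\tau_2,\tau_3)$ a diagonal (scalar-block) matrix, for which $\vartheta[q](\tau)$ and the derivatives $\partial_{z_i}\vartheta[q](z,\tau)$ factor as products of genus-$1$ theta constants and their derivatives. For a diagonal $\tau$, the Jacobian Nullwert $[p_1,p_2,p_3](\tau)$ is a $3\times 3$ determinant whose entries are products of one genus-$1$ (possibly derivative) theta value in each variable; it is nonzero exactly when the three odd characteristics $p_1,p_2,p_3$ decouple so that in each coordinate slot exactly one of them is the odd genus-$1$ characteristic $\left[{1\atop 1}\right]$ and the other two are even — and then the determinant collapses, via the classical genus-$1$ Jacobi identity $\vartheta\left[{1\atop 1}\right]{}'(0,\tau_k) = -\pi\,\vartheta\left[{0\atop 0}\right]\vartheta\left[{1\atop 0}\right]\vartheta\left[{0\atop 1}\right]$, to (up to an explicit sign) the product of the five remaining Thetanullwerte. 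I would check that $N_0$ (and each $N_i,N_i'$) is, after the appropriate coordinate permutation, of exactly this decoupled shape, so that $\calS([N_i],\tau)$ and $\calS([N_i'],\tau)$ each reduce to a product of genus-$1$ Jacobi identities and the determinant sign becomes a product of permutation signs and signs from Lemma~\ref{lem:signchange} coming from the chosen $\{0,1\}$ representatives. Assembling these over $i=0,\dots,3$ and cancelling the genus-$1$ theta factors pairwise (each bitangent/characteristic appearing as often in numerator as denominator, just as in the main text) leaves a bare product of signs, which one computes to be $+1$; this is exactly what the cited Magma script verifies, and I would present it as the verification of that finite sign bookkeeping.

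The main obstacle is the sign bookkeeping itself: one must fix, once and for all, the $\{0,1\}$-representatives of the characteristics $[n_i],[n_i']$ (and of the shifted systems $[N_i],[N_i']$), track how the shift operation $v_i+P$ of Proposition~\ref{prop:shift} interacts with those representatives via \eqref{eq:theta} and Lemma~\ref{lem:signchange}, and keep the column orderings in the Jacobian determinants consistent. None of these steps is deep, but they are error-prone, which is exactly why a computer-algebra check is the honest route: the lemma should be stated as ``verified by the accompanying Magma computation'', with the theoretical reduction above (Corollary~\ref{cor:jacobi} gives $\tau$-independence; specialise to diagonal $\tau$; reduce to genus-$1$ Jacobi identities) explaining why a single such computation suffices. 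Finally I would note the payoff: since $\Gamma = \Sp_6(\FF_2)$ acts transitively on fundamental systems and $\calS([P],\tau)$ transforms under this action by an explicit sign (computable from Igusa's transformation formula, as used in the determination of the sign in Corollary~\ref{cor:jacobi}), knowing $\iota(N_0)=+1$ for one system pins down $\iota(P_0)$ — and hence the sign in Weber's formula — for every fundamental system $P_0$, completing the proof.
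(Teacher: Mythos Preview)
Your plan has a genuine gap: the reduction to diagonal $\tau$ does \emph{not} work for all eight fundamental systems. Concretely, look at the three odd forms of $N_0'$:
\[
n_1'=\begin{bmatrix}0&1&1\\0&0&1\end{bmatrix},\quad
n_2'=\begin{bmatrix}1&0&1\\0&1&1\end{bmatrix},\quad
n_3'=\begin{bmatrix}1&1&0\\0&1&0\end{bmatrix}.
\]
For diagonal $\tau$, the entry $(i,j)$ of the Jacobian matrix is $\vartheta'[q_j^{(i)}](0,\tau_i)\prod_{k\neq i}\vartheta[q_j^{(k)}](0,\tau_k)$, which is nonzero only if the $i$-th slot of $q_j$ is the unique odd slot. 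Here $n_1'$ and $n_2'$ both have their unique odd slot in position $3$, so columns $1$ and $2$ of the Jacobian matrix are supported in the same row and the determinant $[n_1',n_2',n_3'](\tau)$ vanishes. Simultaneously $n_8'=\Ch{111}{101}$ has two odd slots, so $\vartheta[n_8'](\tau)=0$ as well. Thus $\calS([N_0'],\tau)$ is of the form $0/0$ at every diagonal $\tau$, and your ``decoupled shape'' claim is false for $N_0'$. The paper explicitly anticipates this obstruction in the Remark following the lemma: since one is ultimately computing a quotient of Thetanullwerte on the Jacobian of a non-hyperelliptic curve, reducible $\tau$ always force some vanishing.

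What the paper actually does is much blunter but correct: since each $\calS([P],\tau)\in\{\pm1\}$ is $\tau$-independent (Corollary~\ref{cor:jacobi}), one picks a \emph{generic} (non-diagonal) $\tau\in\HH_3$ and evaluates all eight ratios $\calS([N_i],\tau)$, $\calS([N_i'],\tau)$ numerically to low precision in Magma, enough to distinguish $+1$ from $-1$. Your write-up should drop the diagonal reduction and instead justify the numerical check: explain that the sign is constant in $\tau$, that a single evaluation at a generic $\tau$ with error $<1$ determines it, and then cite the accompanying script.
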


\begin{remark}
One would rather  compute the sign using the classical trick
to evaluate the expression with $\tau$ a diagonal matrix. In this case one
can reduce the formula to expressions involving only genus $1$
Thetanullwerte and then use Jacobi identity.
 If this works well 
for $\calS([N_0],\tau)$, then for $\calS([N_0'],\tau)$ (for instance) the numerator and denominator are both
zero. Actually, because of the geometric meaning of the problem
--$\Jac(\calC)$ is an undecomposable principally polarized abelian variety--, it
seems that this will happen for any choice of $N_0$, as soon
as we consider a reducible $\tau$. This is why we had to adopt the
computational approach to get Lemma \ref{lem:n0}.
\end{remark}

We now want to understand what happens when we move to the given 
fundamental system $P_0$ we are interested in. For this purpose,
we will  need a transformation formula which we give
here for $g=3$. Up to identifying a characteristic
$[q]=\Ch{\eps}{\eps'}$ with the vector $\begin{pmatrix} ^t \eps \\ ^t \eps'\end{pmatrix}$, we let 
$\sigma = \begin{pmatrix} a & b \\ c & d \end{pmatrix} \in
\Sp_6(\ZZ)$ act by 
$$\sigma \cdot [q] = \begin{pmatrix} d & -c \\ -b &
  a \end{pmatrix} \begin{pmatrix}  ^t \eps \\  ^t \eps' \end{pmatrix}
+ \begin{pmatrix} (c ^t d)_0 \\ (a ^t b)_0 \end{pmatrix}.$$
Note that when we reduce modulo $2$, this action coincides with the
action of $\Gamma$ on quadratic forms as introduced in Section
\ref{rem:iden}. 
Let us also denote
$$\phi_{[q]}(\sigma)  = -\frac{1}{8} \left({ \eps} {^t b d} {^t \eps} - 2 {
  \eps} {^t bc } {^t \eps'} + { \eps'} {^t a c} {^t \eps'} -2 ^t (a ^t b)_0
({^t d}  {^t \eps} - c {^t \eps'})\right).$$
For a list of characteristics $[P]=([p_1],\ldots,[p_8])$ such that
$P=(p_1,\ldots,p_8)$ is a fundamental system, $\tau \in \HH_3$ and
$\sigma \in \Sp_6(\ZZ)$, let us denote $\sigma \cdot [P] = (\sigma
\cdot [p_1],\ldots,\sigma \cdot [p_8])$.
\begin{lemma}[{\cite[p.433]{igusajacobi}}] \label{lem:transfo}
With the notation above, we have
\begin{equation}\label{eq:transfo}
\calS(\sigma \cdot [P],\sigma \cdot \tau)  
= s([P],\sigma)
\cdot \calS([P],\tau)
\end{equation}
where $s([P],\sigma)= \kappa(\sigma)^{-2} \cdot \ex\left(\sum_{i=1}^3
\phi_{[p_i]}(\sigma) - \sum_{i=4}^8 \phi_{[p_i]}(\sigma)\right)$ and $\kappa(\sigma)$ is an $8$-th root of unity.
\end{lemma}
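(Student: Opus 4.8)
The plan is to deduce the formula from the classical transformation law for theta functions with characteristics under $\Sp_6(\ZZ)$, exactly as in Igusa's memoir. Write $\sigma=\begin{pmatrix}a&b\\c&d\end{pmatrix}$, so $\sigma\cdot\tau=(a\tau+b)(c\tau+d)^{-1}$, and fix once and for all an $8$-th root of unity $\kappa(\sigma)$ together with a branch of $\det(c\tau+d)^{1/2}$. The starting point is the identity, valid for every characteristic $[q]$ and every $z\in\CC^3$,
\[
\vartheta[\sigma\cdot q]\bigl(z\,(c\tau+d)^{-1},\ \sigma\cdot\tau\bigr)=\kappa(\sigma)\,\det(c\tau+d)^{1/2}\,\ex\bigl(Q_\sigma(z)+\phi_{[q]}(\sigma)\bigr)\,\vartheta[q](z,\tau),
\]
where $Q_\sigma$ is a quadratic form in $z$ vanishing to order $2$ at the origin (its exact shape will play no role) and where the placement of the transpose of $c\tau+d$ is immaterial for what follows. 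Note already that the sign ambiguity in the pair $\bigl(\kappa(\sigma),\det(c\tau+d)^{1/2}\bigr)$ is precisely what will force $\kappa(\sigma)$ to enter the final answer only through $\kappa(\sigma)^{-2}$, an unambiguous $4$-th root of unity.

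First I would specialize to $z=0$ for the five even forms $p_4,\dots,p_8$: since $Q_\sigma(0)=0$, this gives $\vartheta[\sigma\cdot p_i](\sigma\cdot\tau)=\kappa(\sigma)\det(c\tau+d)^{1/2}\ex(\phi_{[p_i]}(\sigma))\,\vartheta[p_i](\tau)$, and multiplying over $i=4,\dots,8$,
\[
\prod_{i=4}^8\vartheta[\sigma\cdot p_i](\sigma\cdot\tau)=\kappa(\sigma)^5\,\det(c\tau+d)^{5/2}\,\ex\!\left(\textstyle\sum_{i=4}^8\phi_{[p_i]}(\sigma)\right)\prod_{i=4}^8\vartheta[p_i](\tau).
\]
Next I would treat the three odd forms $p_1,p_2,p_3$. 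Differentiating the master identity in $z_k$ and setting $z=0$, the contribution of $\partial_{z_k}\ex(Q_\sigma(z))$ drops out because it is multiplied by $\vartheta[p_j](0,\tau)=0$, so the chain rule for the substitution $z\mapsto z\,(c\tau+d)^{-1}$ yields the matrix identity (up to the transpose noted above)
\[
(c\tau+d)^{-1}\bigl(\partial_{z_i}\vartheta[\sigma\cdot p_j](0,\sigma\cdot\tau)\bigr)_{i,j}=\bigl(\partial_{z_i}\vartheta[p_j](0,\tau)\bigr)_{i,j}\cdot\mathrm{diag}\!\left(\kappa(\sigma)\det(c\tau+d)^{1/2}\ex(\phi_{[p_j]}(\sigma))\right)_{j}.
\]
Taking determinants and recalling $[p_1,p_2,p_3](\tau)=\pi^{-3}\det\bigl(\partial_{z_i}\vartheta[p_j](0,\tau)\bigr)$, the factor $\det(c\tau+d)$ produced by the left-hand matrix combines with the $\kappa(\sigma)^3\det(c\tau+d)^{3/2}\ex\bigl(\sum_{i=1}^3\phi_{[p_i]}(\sigma)\bigr)$ coming from the diagonal matrix, so that
\[
[\sigma\cdot p_1,\sigma\cdot p_2,\sigma\cdot p_3](\sigma\cdot\tau)=\kappa(\sigma)^3\,\det(c\tau+d)^{5/2}\,\ex\!\left(\textstyle\sum_{i=1}^3\phi_{[p_i]}(\sigma)\right)[p_1,p_2,p_3](\tau).
\]

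Finally I would form the quotient defining $\calS(\sigma\cdot[P],\sigma\cdot\tau)$. The powers $\det(c\tau+d)^{5/2}$ in numerator and denominator — which agree because the Jacobian Nullwert carries weight $1+3/2$ while the five even Thetanullwerte carry total weight $5/2$ — cancel exactly, and with them the chosen branch of the square root; what survives is $\kappa(\sigma)^{3-5}=\kappa(\sigma)^{-2}$ times $\ex\bigl(\sum_{i=1}^3\phi_{[p_i]}(\sigma)-\sum_{i=4}^8\phi_{[p_i]}(\sigma)\bigr)$, which is exactly $s([P],\sigma)$, proving \eqref{eq:transfo}. The main obstacle is not conceptual but bookkeeping: one must pin down Igusa's precise normalization of $\phi_{[q]}(\sigma)$ and of $Q_\sigma$, reconcile the integer-versus-half-integer conventions for characteristics used in Section~\ref{RJformula}, and verify that the extra determinant produced by the chain rule in the odd case is exactly $\det(c\tau+d)$ to the first power — any sign slip would corrupt the resulting $\kappa(\sigma)^{-2}$, which is precisely why it is prudent to quote Igusa's computation rather than reproduce it in full.
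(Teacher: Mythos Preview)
The paper does not supply its own proof of this lemma: it is stated as a citation of Igusa \cite[p.433]{igusajacobi} and used as a black box in Section~\ref{sec:sign}. Your derivation from the classical transformation law for theta functions with characteristics is exactly the computation Igusa carries out, and your bookkeeping of the weights --- $\kappa(\sigma)^3\det(c\tau+d)^{5/2}$ for the Jacobian Nullwert versus $\kappa(\sigma)^5\det(c\tau+d)^{5/2}$ for the product of five Thetanullwerte --- is correct, so the argument is sound and matches the intended source.
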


Let $P_0 = (p_0,\ldots,p_8)$ and let $\tilde{\sigma}
\in \Gamma$ be a matrix such that $\tilde{\sigma} \cdot n_i =
p_i$ for $1\leq i \leq 8$. Such a matrix always exists by the transitive action of
$\Gamma$ on fundamental systems. Let $\sigma \in \Sp_6(\ZZ)$ be any
lift of $\tilde{\sigma}$. If we apply $\sigma$ to the normalized
characteristics coming from the $N_i$ and $N_i'$, we get
characteristics for the forms in the $P_i$ and $P_i'$ because of the
linearity of the transformations involved in the definition of these
fundamental systems. Note that since  the lift of a given quadratic
form in the various fundamental systems $N_i,N_i'$ is fixed in the
various list of characteristics  $[N_i], [N_i']$ the
characteristics of the $[P_i]=\sigma \cdot [N_i]$, $[P_i']=\sigma
\cdot [N_i']$ have the same property. Moreover, even if
the
characteristics of the $[P_i]$ and $[P_i']$ are not necessarily normalized, we have already
noticed that the value of the global quotient does not change, since
all of them appear (twice) in the numerator and denominator. Because
of all these considerations, we get that 
\begin{eqnarray*}
\iota(P_0) &=&  \frac{\prod_{i=0}^3 \calS([P_i],\sigma \cdot \tau)}{\prod_{i=0}^3
  \calS([P'_i],\sigma \cdot \tau)} = \frac{\prod_{i=0}^3 \calS(\sigma \cdot [N_i],\sigma \cdot \tau)}{\prod_{i=0}^3
  \calS(\sigma \cdot [N'_i],\sigma \cdot \tau)} 
 = \frac{\prod_{i=0}^3   s([N_i],\sigma)}{\prod_{i=0}^3   s([N'_i],\sigma)}
\cdot \frac{\prod_{i=0}^3 \calS([N_i],\tau)}{\prod_{i=0}^3
  \calS([N'_i],\tau)} \\
&=& \frac{\prod_{i=0}^3   s([N_i],\sigma)}{\prod_{i=0}^3   s([N'_i],\sigma
)} 
= \frac{\ex(4 \cdot \phi_{[n'_8]}(\sigma))}{\ex(4 \cdot \phi_{[n_8]}(\sigma))} 
= (-1) ^{8 \cdot \phi_{[n'_8]}(\sigma)- 8 \cdot \phi_{[n_8]}(\sigma)}
\end{eqnarray*}
as all the characteristics apart from $[n_8]$ and $[n'_8]$ appear
twice in the numerator and the denominator. To finish the proof we
hence need the following lemma.
\begin{lemma}
$$8 \cdot \phi_{[n'_8]}(\sigma)- 8 \cdot \phi_{[n_8]}(\sigma) \equiv
 a(\sigma \cdot [n_8]  + \sigma \cdot [n'_8]+q_0) \pmod{2}.$$
\end{lemma}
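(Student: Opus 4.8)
The plan is to reduce the claimed congruence to an explicit, finite computation by exploiting the fact that both sides are quantities that can be evaluated once we fix coordinates for $[n_8]=\Ch{0}{0}$, $[n'_8]=\Ch{1}{1}{1}{1}{0}{1}$ (in the $g=3$ notation, the $8$th forms of the lists $N_0$ and $N_0'$ given in the statement), and an arbitrary $\sigma=\begin{pmatrix} a & b \\ c & d\end{pmatrix}\in\Sp_6(\ZZ)$. First I would write out $8\cdot\phi_{[q]}(\sigma)$ for a general characteristic $[q]=\Ch{\eps}{\eps'}$: from the definition of $\phi_{[q]}$ this is
\[
8\cdot\phi_{[q]}(\sigma) = -\bigl(\eps\, {}^t bd\, {}^t\eps - 2\eps\, {}^t bc\, {}^t\eps' + \eps'\, {}^t ac\, {}^t\eps' - 2\, {}^t(a\, {}^t b)_0({}^t d\, {}^t\eps - c\, {}^t\eps')\bigr),
\]
an integer depending on $\sigma$ and $[q]$. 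Since $[n_8]=\Ch{0}{0}$ kills all terms involving $\eps,\eps'$ except possibly none, one gets $8\cdot\phi_{[n_8]}(\sigma)=0$, so the left-hand side collapses to $8\cdot\phi_{[n'_8]}(\sigma)\pmod 2$.

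Next I would compute the right-hand side. By Lemma \ref{rem:char} (or the remark just before it), the Arf invariant of a sum of characteristics can be read off from the formula $a(q)=\eps\cdot\eps'$ together with the addition rules for characteristics displayed in Section \ref{rem:iden}; more directly, $a(\sigma\cdot[n_8]+\sigma\cdot[n'_8]+q_0)$ is a function of $\sigma$ and of the two fixed forms $n_8,n'_8$ that one evaluates using the matrix description of the $\Gamma$-action: $\sigma\cdot[q]$ has coordinates $\begin{pmatrix} d & -c\\ -b & a\end{pmatrix}\begin{pmatrix}{}^t\eps\\ {}^t\eps'\end{pmatrix}+\begin{pmatrix}(c\,{}^td)_0\\ (a\,{}^tb)_0\end{pmatrix}$, reduced mod $2$. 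So $\sigma\cdot[n_8]$ has mod-$2$ coordinates $\bigl((c\,{}^td)_0,(a\,{}^tb)_0\bigr)$ and $\sigma\cdot[n'_8]$ has coordinates obtained by adding to this the image of the fixed vector $n'_8-n_8=(1,1,1\,|\,1,0,1)$ under $\begin{pmatrix} d & -c\\ -b & a\end{pmatrix}$. Adding $q_0$ (coordinates $(0,0,0\,|\,0,0,0)$ as a form, i.e.\ it contributes via the principal-homogeneous-space structure, not as a shift — here one must be slightly careful: $a(q+q'+q'')$ for three forms is given by Lemma \ref{lem:comp}). I would therefore apply Lemma \ref{lem:comp} with $q=q_0$, $q'=\sigma\cdot n_8$, $q''=\sigma\cdot n'_8$ to get
\[
a(q_0+\sigma\cdot n_8+\sigma\cdot n'_8)=a(q_0)+a(\sigma\cdot n_8)+a(\sigma\cdot n'_8)+\langle q_0+\sigma\cdot n_8,\, q_0+\sigma\cdot n'_8\rangle,
\]
and since $a$ is $\Gamma$-invariant, $a(\sigma\cdot n_8)=a(n_8)=0$, $a(\sigma\cdot n'_8)=a(n'_8)$ (which I read off as $0$ since $n'_8$ is the $8$th, hence even, form of the fundamental system $N_0'$), and $a(q_0)=0$. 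So the right-hand side equals $\langle q_0+\sigma\cdot n_8,\, q_0+\sigma\cdot n'_8\rangle$, which by $\Gamma$-equivariance of the pairing equals $\langle\sigma(q_0+n_8),\,\sigma(q_0+n'_8)\rangle=\langle q_0+n_8,\,q_0+n'_8\rangle$ — a \emph{constant} independent of $\sigma$! This forces the right-hand side to be constant, hence I must have mis-identified something; the resolution is that $q_0$ in the statement is the \emph{curve-dependent} form $q_0+q_0^{\sigma}$ business — more precisely $q_0$ here is the fixed reference form, and $n_8, n'_8$ in the final lemma are the $P_0$-forms, not the $N_0$-forms. So the correct reading is $n_8=p_8=q_S$, $n'_8=p'_8=q_T$, and $\sigma$ is the lift with $\sigma\cdot[N_i]=[P_i]$, so $[n_8],[n'_8]$ in the lemma denote the \emph{normalized $N_0$-characteristics} while $\sigma\cdot[n_8], \sigma\cdot[n'_8]$ are the $P_0$-ones. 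With that reading the left side is genuinely $\sigma$-dependent and the right side is $a(\sigma\cdot[n_8]+\sigma\cdot[n'_8]+q_0)=\langle q_0+\sigma\cdot n_8,\,q_0+\sigma\cdot n'_8\rangle$ by the same Lemma \ref{lem:comp} computation, now genuinely depending on $\sigma$ through $\sigma\cdot n_8,\sigma\cdot n'_8$.

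Having both sides in closed form, the proof is then a verification that, for every $\sigma\in\Sp_6(\ZZ)$,
\[
8\cdot\phi_{[n'_8]}(\sigma) \equiv \langle q_0+\sigma\cdot n_8,\, q_0+\sigma\cdot n'_8\rangle \pmod 2.
\]
I would expand $8\cdot\phi_{[n'_8]}(\sigma)$ using the explicit coordinates $n'_8=\Ch{1}{1}{1}{1}{0}{1}$ (i.e.\ $\eps=(1,1,1)$, $\eps'=(1,0,1)$), obtaining a quadratic expression in the entries of $a,b,c,d$ reduced mod $2$; and I would expand the pairing, where $q_0+\sigma\cdot n_8$ has mod-$2$ coordinates $\bigl((c\,{}^td)_0,(a\,{}^tb)_0\bigr)$ (as $n_8=\Ch{0}{0}$ and $q_0$ shifts the affine origin, contributing $0$ in coordinates for the difference-vector) and $q_0+\sigma\cdot n'_8$ has coordinates that one adds the $\sigma$-image of $(1,1,1\,|\,1,0,1)$ to; the symplectic pairing $\langle(\lambda,\mu),(\lambda',\mu')\rangle=\lambda\cdot\mu'+\mu\cdot\lambda'$ then produces a second quadratic expression in the entries of $\sigma$. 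The last step is to check these two mod-$2$ polynomials agree, using the $\Sp_6$ relations ${}^tad+{}^tcb=\mathrm{id}$, ${}^tac$ and ${}^tbd$ symmetric; concretely one uses that ${}^tad\equiv\mathrm{id}+{}^tcb$ and that the diagonal of a symmetric matrix over $\ZZ$ interacts with the quadratic-form reduction exactly as the $(\ )_0$ subscripts encode. \textbf{The main obstacle} is precisely this final bookkeeping: keeping straight which $q_0$-shifts contribute in coordinates versus in the principal-homogeneous-space sense, and pushing the mod-$2$ reduction of $8\phi$ (an \emph{a priori} $\ZZ$-valued, not $\ZZ/8$-valued, quantity) through the symplectic relations so that the cross terms $2\eps\,{}^tbc\,{}^t\eps'$ and $2\,{}^t(a\,{}^tb)_0(\cdots)$ — which survive mod $2$ only through their "$\times 2$" pieces paired against diagonals — land on the right monomials. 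This is the kind of identity that is clean on paper once the conventions are pinned down but is easiest to double-check with the same Magma script referenced for Lemma \ref{lem:n0}; I would do both.
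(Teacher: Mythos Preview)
Your overall plan — reduce both sides to explicit polynomials in the entries of $\sigma$ and compare modulo $2$ using the symplectic relations — is the same as the paper's, and it works. But two things in your write-up are off.

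First, the detour through $\Gamma$-equivariance is a genuine error, and your ``fix'' is the wrong diagnosis. From Lemma~\ref{lem:comp} you correctly obtain
\[
a(q_0+\sigma\cdot n_8+\sigma\cdot n'_8)=\langle q_0+\sigma\cdot n_8,\ q_0+\sigma\cdot n'_8\rangle,
\]
but the step $\langle q_0+\sigma\cdot n_8,\ q_0+\sigma\cdot n'_8\rangle=\langle\sigma(q_0+n_8),\ \sigma(q_0+n'_8)\rangle$ is false: the action of $\Gamma$ on quadratic forms is \emph{affine}, so $q_0+\sigma\cdot n_8\neq\sigma\cdot(q_0+n_8)$ in general (indeed $q_0+\sigma\cdot q_0$ is exactly the translation part $((c\,{}^td)_0,(a\,{}^tb)_0)$). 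That is why the pairing genuinely depends on $\sigma$. Your reinterpretation of $[n_8],[n'_8]$ as the $P_0$-characteristics is not the resolution; the lemma really is about the normalized $N_0,N_0'$ characteristics and the transformation $\sigma$, as you first read it.

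Second, you make the remaining verification look heavier than it is. The paper bypasses both Lemma~\ref{lem:comp} and the specific numerical value of $[n'_8]$: write $[n'_8]=\Ch{\eps}{\eps'}$ for arbitrary $\eps,\eps'$; since $[n_8]=\Ch{0}{0}$ one has $8\phi_{[n_8]}(\sigma)=0$, and modulo $2$ the only surviving terms of $8\phi_{[n'_8]}(\sigma)$ are $r_1:=\eps\,{}^tbd\,{}^t\eps+\eps'\,{}^tac\,{}^t\eps'$. On the other side, the translation parts of $\sigma\cdot[n_8]$ and $\sigma\cdot[n'_8]$ cancel in the sum, so
\[
[q]:=\sigma\cdot[n_8]+\sigma\cdot[n'_8]+[q_0]\equiv\begin{pmatrix}d\,{}^t\eps-c\,{}^t\eps'\\-b\,{}^t\eps+a\,{}^t\eps'\end{pmatrix}\pmod 2,
\]
whence $a(q)\equiv \eps\,{}^tbd\,{}^t\eps+\eps'\,{}^tac\,{}^t\eps'+\eps({}^tbc+{}^tda){}^t\eps'\equiv r_1+\eps\,{}^t\eps'=r_1+a(n'_8)=r_1$, using the symplectic relation and the evenness of $n'_8$. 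That is the entire computation — three lines, no Magma needed, and no reliance on the particular coordinates $(1,1,1\,|\,1,0,1)$.
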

\begin{proof}
Let $\sigma = \begin{pmatrix} a & b \\ c & d \end{pmatrix} \in
\Sp_6(\ZZ)$ and $[n'_8] = \Ch{\eps}{\eps'}$. The left hand side of
the expression is equivalent modulo $2$ to $r_1={ \eps} ^t b d {^t
  \eps} + { \eps'} {^t a c}{^t \eps'}.$
On the other hand 
$$[p_8] = \sigma \cdot [n_8] =\sigma \cdot
\Ch{000}{000}=  \begin{bmatrix}  (c ^t d)_0 \\  (a ^t
  b)_0 \end{bmatrix}$$
and 
$$[p'_8] = \sigma \cdot  [n'_8] = \begin{pmatrix} d & -c \\ -b &
  a \end{pmatrix} \begin{pmatrix} ^t \eps \\ ^t \eps' \end{pmatrix}
+ \begin{pmatrix} (c ^t d)_0 \\ (a ^t b)_0 \end{pmatrix}.$$
So  
\begin{eqnarray*}
[q] = [p_8]+[p'_8]+[q_0]&=&
\begin{pmatrix} d & -c \\ -b &
  a \end{pmatrix} \begin{pmatrix} ^t \eps \\ ^t \eps' \end{pmatrix}
  \equiv  \begin{pmatrix} d ^t \eps - c ^t \eps' \\ -b ^t \eps
  + a ^t \eps' \end{pmatrix} \pmod{2}.
\end{eqnarray*}
Finally 
\begin{eqnarray*}
a(q)& \equiv & ^t (d ^t \eps - c ^t \eps') { (-b ^t \eps
  + a ^t \eps')} \equiv \eps {^t b d} ^t \eps
+ \eps' {^t a c}
^t \eps' +  \eps ({^t b c} + {^t d  a}) ^t \eps' \\
&\equiv & r_1 + \eps ^t \eps'  \equiv r_1 + a(n'_8) \equiv r_1 \pmod{2}.
\end{eqnarray*}
\end{proof}

\bibliographystyle{abbrv}


\end{document}